\title[ ]{Uniform position alignment estimate of spherical flocking model with inter-particle bonding forces}
\author[Sun-Ho Choi]{Sun-Ho Choi}
\address[Sun-Ho Choi]{Department of Applied Mathematics and the Institute of Natural Sciences, Kyung Hee University, Yongin, 17104,  Republic of Korea}
\email{sunhochoi@khu.ac.kr}
\author[Dohyun Kwon]{Dohyun Kwon}
\address[Dohyun Kwon]{Department of Mathematics, University of Wisconsin-Madison, 480 Lincoln Dr., Madison, WI 53706, USA}
\email{dkwon7@wisc.edu}
\author[Hyowon Seo]{Hyowon Seo}
\address[Hyowon Seo]{Department of Applied Mathematics and the Institute of Natural Sciences, Kyung Hee University, Yongin, 17104,  Republic of Korea}
\email{hyowseo@gmail.com}
\begin{document}

\newtheorem{theorem}{Theorem} [section]
\newtheorem{maintheorem}{Theorem}
\newtheorem{lemma}[theorem]{Lemma}
\newtheorem{assumption}[theorem]{Assumption}
\newtheorem{proposition}[theorem]{Proposition}
\newtheorem{remark}[theorem]{Remark}
\newtheorem{example}{Example}
\newtheorem{exercise}{Exercise}
\newtheorem{question}{Question}
\newtheorem{definition}{Definition}[section]
\newtheorem{corollary}[theorem]{Corollary}


\def\A{{\mathcal A}}
\def\B{{\mathcal B}}
\def\C{{\mathcal C}}
\def\D{{\mathcal D}}
\def\E{{\mathcal E}}
\def\F{{\mathcal F}}
\def\G{{\mathcal G}}
\def\H{{\mathcal H}}
\def\I{{\mathcal I}}
\def\J{{\mathcal J}}
\def\K{{\mathcal K}}
\def\L{{\mathcal L}}
\def\M{{\mathcal M}}
\def\N{{\mathbb N}}
\def\O{{\mathcal O}}
\def\P{{\mathbb P}}
\def\Q{{\mathbb Q}}
\def\R{{\mathbb R}}
\def\S{{\mathbb S}}
\def\T{{\mathcal T}}
\def\W{{\mathcal W}}
\def\X{{\vec{X}}}
\def\V{{\mathcal V}}
\def\Y{{\mathcal Y}}
\def\Z{{\mathbb Z}}

\newcommand{\noi}{\noindent}
\newcommand{\Rd}{{\R^d}} %
\newcommand{\Rn}{{\R^n}} %
\newcommand{\bul}{\bullet}

\newcommand{\RR}{\mathcal{R}}
\newcommand{\HH}{\mathcal{H}}

\newcommand{\al}{\alpha}
\newcommand{\dl}{\delta}
\newcommand{\Dl}{\Delta}
\newcommand{\eps}{\varepsilon}
\newcommand{\e}{\varepsilon} %
\newcommand{\kk}{\kappa}
\newcommand{\gam}{\gamma}
\newcommand{\Gam}{\Gamma} %
\newcommand{\lam}{\lambda}
\newcommand{\ld}{\lambda}
\newcommand{\Lam}{\Lambda}
\newcommand{\Ld}{\Lambda}
\newcommand{\s}{N\sigma}
\newcommand{\ft}{\widehat}
\newcommand{\wt}{\widetilde}
\newcommand{\cj}{\overline}
\newcommand{\dx}{\partial_x}
\newcommand{\dt}{\partial_t}
\newcommand{\dd}{\partial}
\newcommand{\invft}[1]{\overset{\vee}{#1}}
\newcommand{\lrarrow}{\leftrightarrow}
\newcommand{\embeds}{\hookrightarrow}
\newcommand{\LRA}{\Longrightarrow}
\newcommand{\LLA}{\Longleftarrow}

\newcommand{\om}{\omega}
\newcommand{\Om}{\Omega}

\newcommand{\wto}{\rightharpoonup}

\newcommand{\jb}[1]
{\langle #1 \rangle}

\newcommand{\kwon}[1]{{\color{blue} #1  }}


\renewcommand{\theequation}{\thesection.\arabic{equation}}
\renewcommand{\thetheorem}{\thesection.\arabic{theorem}}
\renewcommand{\thelemma}{\thesection.\arabic{lemma}}
\newcommand{\bbr}{\mathbb R}
\newcommand{\bbz}{\mathbb Z}
\newcommand{\bbn}{\mathbb N}
\newcommand{\bbs}{\mathbb S}
\newcommand{\bbp}{\mathbb P}
\newcommand{\ddiv}{\textrm{div}}
\newcommand{\bn}{\bf n}
\newcommand{\rr}[1]{\rho_{{#1}}}
\newcommand{\thh}{\theta}
\def\charf {\mbox{{\text 1}\kern-.24em {\text l}}}
\renewcommand{\arraystretch}{1.5}

\newcommand{\Rp}{{\mathbb R^+}}
\newcommand{\Rpz}{{[0, \infty)}}
\newcommand{\Rm}{{\mathbb R^+}}
\newcommand{\Rmz}{{\mathbb R^-_0}}
\newcommand{\Rinf}{{\R \cup \{+\infty\}}}
\newcommand{\Rinfm}{{\R \cup \{-\infty\}}}
\newcommand{\Rinfpm}{{\R \cup \{\pm \infty\}}}
\newcommand{\Rpt}{{[0,T]}}


\newcommand{\ro}{{R_{\cdot \rightarrow \cdot}}}
\newcommand{\rji}{{R_{x_j \rightarrow x_i}}}
\newcommand{\rki}{{R_{x_k \rightarrow x_i}}}
\newcommand{\rij}{{R_{x_i \rightarrow x_j}}}
\newcommand{\rjih}{{R_{x_j \rightarrow \frac{x_i+x_j}{|x_i+x_i|}}}}
\newcommand{\rijh}{{R_{x_i \rightarrow \frac{x_i+x_j}{|x_j+x_i|}}}}

\newcommand{\hR}{{P}}
\newcommand{\hro}{{\hR_{\cdot \rightarrow \cdot}}}
\newcommand{\hrji}{{\hR_{x_j \rightarrow x_i}}}
\newcommand{\hrij}{{\hR_{x_i \rightarrow x_j}}}
\newcommand{\hrki}{{\hR_{x_k \rightarrow x_i}}}

\newcommand{\rotji}{{R_{z_j \rightarrow z_i}}}
\newcommand{\rot}{{R_{z_1 \rightarrow z_2}}}
\newcommand{\rott}{{R_{z_1 \rightarrow z_2}^T}}
\newcommand{\roti}{{R_{z_1 \rightarrow z_2}^{-1}}}
\newcommand{\rotr}{{R_{z_2 \rightarrow z_1}}}

\newcommand{\hrot}{{\hR_{z_1 \rightarrow z_2}}}
\newcommand{\hrott}{{\hR_{z_1 \rightarrow z_2}^T}}
\newcommand{\hroti}{{P_{z_1 \rightarrow z_2}^{-1}}}
\newcommand{\hrotr}{{P_{z_2 \rightarrow z_1}}}

\newcommand{\xc}{{x_1\times x_2}}
\newcommand{\xd}{{x_1 \cdot x_2}}
\newcommand{\xcji}{{x_j\times x_i}}

\newcommand{\zc}{{z_1\times z_2}}
\newcommand{\zd}{{z_1 \cdot z_2}}
\newcommand{\zcji}{{z_j\times z_i}}


\newcommand{\sumi}{{\sum_{i=1}^{N}}}
\newcommand{\sumj}{{\sum_{j=1}^{N}}}
\newcommand{\sumij}{{\sum_{i,j=1}^{N}}}
\newcommand{\avei}{{\frac{1}{N}\sum_{i=1}^{N}}}
\newcommand{\xvi}{{\{(x_i, v_i )\}_{\idn}}}
\newcommand{\xii}{{\{x_i\}_{\idn}}}
\newcommand{\vii}{{\{v_i\}_{\idn}}}
\newcommand{\idn}{{1 \leq i \leq N}}
\newcommand{\ijdn}{{1 \leq i,j \leq N}}

\newcommand{\vsup}{{\|v\|_{2,\infty}}}
\newcommand{\vtt}{{\|v\|_{2,2}}}
\newcommand{\vttt}{{\|v\|_{2,2}^2}}
\newcommand{\vssavg}{{\frac{1}{N}\|v\|_{2,2}^2}}

\newcommand{\dox}{{\dot{x}}}
\newcommand{\dov}{{\dot{v}}}
\newcommand{\dow}{{\dot{w}}}

\thanks{
}
\begin{abstract}
We present a sufficient condition of the complete position flocking theorem for the Cucker-Smale type model on the unit sphere with an inter-particle bonding force. For this second order dynamical system derived in [Choi, S.-H.,  Kwon, D. and Seo, H.:  Cucker-Smale type flocking models on a sphere. arXiv preprint arXiv:2010.10693, 2020] by using the rotation operator in three dimensional sphere, we obtain an exponential decay estimate for the diameter of agents' positions as well as time-asymptotic flocking for a class of initial data. The sufficient condition for the initial data depends only on the communication rate and inter-particle bonding parameter but not the number of agents. The lack of momentum conservation and the curved space domain make it difficult to apply the standard methodology used in the original Cucker-Smale model. To overcome this and obtain a uniform position alignment estimate, we use an energy dissipation property of this system and transform the Cucker-Smale type flocking model into an inhomogeneous system of differential equations of which solution contains the position  and velocity diameters. The coefficients of the transformed system are controlled by the communication rate and a uniform upper bound of  velocities  obtained by the energy dissipation.
\end{abstract}

\maketitle


%
%
\section{Introduction}
\setcounter{equation}{0}
Many species in nature such as birds, fish, and small germs form cluster to survive. Researchers have conducted various studies to understand this clustering phenomenon for the past several decades \cite{Adler,  Kuromoto, T-T, Winfree}. The flexility for real world applications is one of the major  reasons why this phenomenon attracts attention from the many researchers. For example,  the effective control of a large number of unmanned drones by imitating nature is one of popular topics in the engineering community \cite{NSMD, chandler, CLV}. For the development of a surveillance  system, a flocking algorithm is used to cover large areas with limited resources and to track targets \cite{S-B}. This flocking phenomenon also has been widely used in various fields and has been studied intensively in the last decade. In particular, the Cucker-Smale (C-S) model is one of important models that sparked various types of mathematical researches  in this field.

Cucker and Smale \cite{CucSma07} introduced a system of ordinary differential equations (ODEs) given by
\begin{align*}
\dot{x}_i=v_i,\qquad 
\dot{v}_i=\sum_{j=1}^{N} \psi_{ij}(v_j - v_i),
\end{align*}
where $x_i$ and $v_i$ are the position and the velocity of the $i$th agent for $\idn$, respectively. Moreover, $\psi_{ij}$ is the communication rate between $i$th and $j$th agents. We notice that  the C-S model contains the acceleration term described by weighted internal relaxation forces.

In this paper,  we focus on the complete position alignment of the corresponding C-S type flocking model on a sphere when it contains an inter-particle bonding force. We present a new framework to obtain the complete position flocking under a sufficient condition of initial data.  We emphasize that our  condition for the complete position flocking is independent of the number of agents. In particular, we prove that this second order dynamical system has a uniform exponential decay rate for the diameter of agents' positions.

From the nature of the model on sphere, the avoidance of antipodal points is necessary to guarantee the formation of a group as in Definition~\ref{def:flo}. However, due to the curved geometry, it is hard to control the position diameter of ensemble $\{(x_i,v_i)\}_{i=1}^N$, even assuming sufficiently fast velocity alignment.  Thus, motivated by the flat space case studied in \cite{PKH10},  we have added a modified inter-particle bonding force to the flocking model on sphere  to control the position diameter in our previous paper \cite{C-K-S}.  Due to the geometric property of sphere, a direct application of the bonding force term in $\bbr^3$ to the model on a sphere may disrupt the motion on the sphere. Instead, we employ a coupling force based on the Lohe operator in \cite{CCH14, Lohe}
and derive the following flocking model \cite{C-K-S} on a sphere with inter-particle bonding forces:
\begin{subequations}
\label{maino}
\begin{align}
\label{mainoa}
\dot{x}_i&=v_i,\\
\label{mainob}
\dot{v}_i&= -\frac{\|v_i\|^2}{\|x_i\|^2}x_i +\sum_{k=1}^N\frac{\psi_{ik}}{N}(R_{x_k \rightarrow x_i} (v_k)- v_i) + \sum_{k=1}^N \frac{\sigma }{N}(x_k - \langle x_i,x_k \rangle  x_i),
\end{align}
\end{subequations}
where $\psi_{ij}$ is the communication rate between $i$th and $j$th agents and   $R_{\cdot\rightarrow \cdot}$ is a rotation operator given by
\[R_{x_k \rightarrow x_i} (v_j)=R(x_k,x_i)\cdot v_j\]
and for $x_k\ne x_i$,
\begin{align}
\label{eqn:rot}
\begin{aligned}
R(x_k,x_i):=
\langle x_k,x_i\rangle  I + x_i x_k^T - x_k x_i^T + (1-  \langle x_k,x_i\rangle) \left( \frac{x_k \times x_i}{|x_k \times x_i|} \right) \left( \frac{x_k \times x_i}{|x_k \times x_i|} \right)^T.
\end{aligned}
\end{align}
Here, $x_k$, $x_i$ and $v_j$ are three dimensional column vectors. We will discuss the properties of the rotational operator in detail in the next section.

The third term in the right hand side of \eqref{mainob} is one of the cooperative control laws and $\sigma>0$ is the inter-particle bonding force rate. We note that \eqref{mainob} only contains the attractive force. In general, the cooperative control  law consists of the sum of attractive and repulsive forces. See \cite{C-K-S2}.  The research on the cooperative control  law of multi-agent systems such as \eqref{maino}  is steadily increasing \cite{M,R-B}  in the engineering field after the development of wireless communication technology. The flocking, agreement, formation and collision avoidance are their main subjects \cite{L-S, M-K-K,O,S-B}. For example, in  \cite{D}, the opinions of committee are regarded as points and the conditions for convergence are provided.  The authors in \cite{A-N} proposed a controller that yields the angular position synchronization of robot systems. Recently, for practical reasons, the research has been conducted in several restricted cases  such as  system without velocity information \cite{A-T,A-N}, limited visibility robots \cite{A-O-S-Y,F-P-S-W} and  objects on a sphere \cite{L-Z,L-S}.  In these studies, consensus algorithms was used  to allow the individuals in the system to behave as one group.  The corresponding natural  rendezvous concept is  given by
\begin{definition}\label{def 1.2} \cite{L-S} Let $\{(x_i,v_i)\}_{i=1}^N $ be the solution  to \eqref{maino}.  The ensemble $\{(x_i,v_i)\}_{i=1}^N $ has an asymptotic rendezvous, if
\[\lim_{t\to\infty }\max_{i,j}\|x_i(t)-x_j(t)\|=0.\]
\end{definition}

By using the rotation operator $\ro$, we can also define the flocking on a sphere.

\begin{definition}
\label{def:flo}
\cite{C-K-S}
A dynamical system  on a sphere has time-asymptotic flocking if its solution $\{(x_i,v_i)\}_{i=1}^N $ satisfies the following conditions:
\begin{itemize}
\item (velocity alignment) the relative velocity of any two agents goes to zero  as time goes to $\infty$:
\begin{align}\nonumber
\lim_{t \rightarrow \infty} \max_{1\leq i,j\leq N} \|x_i(t) + x_j(t)\| \|  \rji v_j(t) - v_i(t) \| = 0.
\end{align}
\item (antipodal points avoidance) any two agents are not located at the antipodal points for all $t\geq 0$:
\[\liminf_{t\geq 0}\min_{1\leq i,j\leq N}\|x_i(t)+x_j(t)\|>0.\]
\end{itemize}
\end{definition}

In \cite{C-K-S}, we obtain that the flocking model has the velocity alignment property for any $\sigma\geq 0$.
The model has the time-asymptotic flocking for a given $\sigma>0$ with the initial data satisfying a sufficient condition depending on $\psi$, $\sigma$, and $N$. See Theorem \ref{thm:0} in Section 2. The purpose of this paper is to remove the dependence of $N$ and obtain an exponential decay rate. The following energy functional, motivated by \cite{PKH10}, plays a crucial role in the proof of the main theorem in this paper as well as  \cite{C-K-S}.
\begin{definition}
 For a solution $\xvi$ to \eqref{maino},  the energy functional $\E(t)=\E(x(t),v(t))$ is defined by
\begin{align}
\label{eqn:e}
\E:= \E_K + \E_C, \quad \E_K(t):= \frac{1}{N}\sum_{k=1}^N\|v_k(t)\|^2, \quad  \E_C(t) := \frac{\sigma}{2N^2 } \sum_{k,l=1}^N \| x_k(t) - x_l(t) \|^2.
\end{align}
\end{definition}
 We note that in \cite{C-K-S},  to obtain the antipodal avoidance or the velocity alignment, the main difficulty comes from the last term of the operator $\rot$ given in \eqref{eqn:rot}:
 \begin{align}\label{R3}
(1-  \langle z_1,z_2\rangle ) \left( \frac{z_1 \times z_2}{|z_1 \times z_2|} \right) \left( \frac{z_1 \times z_2}{|z_1 \times z_2|} \right)^T. \end{align}
Due to this term, $dR/dt$ can be singular when $x_i(t) = -x_j(t)$ for some $t>0$. This antipodal configuration corresponds to $x_i=\infty$ in the original C-S case.  Even assuming an exponentially fast flocking, we cannot control the position diameter due to geometric constraints. In \cite{C-K-S}, to avoid this singularity and obtain the flocking theorem for the inter-particle bonding force, we first construct energy inequality:
\begin{align*}
\E(t) +  \sumij \int_0^t \frac{\psi_{ij}}{N^2} \| R_{x_j(s) \rightarrow x_i(s)}(v_j(s))-v_i(s)\|^2 ds \leq \E(0) \quad\hbox{ for all } t \in \Rpz.
\end{align*}
This energy inequality yields a uniform positive lower bound of $\| x_i + x_j \|$ under a sufficient condition of initial data depending  on the number of agents $N$. For more details, see \cite{C-K-S}. From this bound, we  showed uniform Lipschitzness of $v_i$ as well as $\rji v_j$ and we concluded that the asymptotic flocking occurs  by Barbalat's lemma.

However, with reference to  the flat space case, it is a natural expectation that asymptotic exponential rendezvous will happen and the  sufficient condition of initial data is independent of the number of agents $N$. To obtain the uniform position alignment for the spherical model in \eqref{maino}, we crucially use the boundedness of the energy $\E$ and the modulus conservation property of $R$. Unlike the flocking result in \cite{C-K-S}, in the complete position alignment point of view, $ x_i x_k^T$ and $- x_k x_i^T $ terms in the rotation operator  $\ro$ causes the main difficulty, but the modulus conservation property of  $\ro$  via \eqref{R3} enables us to prove our main result.

Throughout this paper, we assume that the communication rate $\psi_{ij}$ satisfies
\begin{itemize}
\item[$(\mathcal{H}1)$]  $\psi_{ij}=\psi(\|x_i-x_j\|)$,
\item[$(\mathcal{H}2)$] $\psi$ is a nonnegative strictly decreasing function with $\psi(2)=0$,
\item[$(\mathcal{H}3)$] $\psi$ is $C^1$ function on $[0,2]$.
\end{itemize}
With this assumptions on $\psi$, we obtain the following complete position flocking result.
\begin{maintheorem}
\label{thm:1}
Assume that $\psi_{ij}$ satisfies $(\mathcal{H}1)$-$(\mathcal{H}3)$ and the initial data satisfy that
\begin{align*}
\max_{1\leq k\leq N}\|v_k(0)\|< \mathcal{V}^0,\quad   \E(0)<\E^0,
\end{align*}
\begin{align}\label{condition for DxDv}
\max_{1\leq i,j\leq N}\|x_i(0)-x_j(0)\|< \mathcal{D}_x^0,\quad\max_{1\leq i,j\leq N}\|v_i(0)-v_j(0)\|<\mathcal{D}_v^0.
\end{align}
Then the solution to \eqref{maino} has time-asymptotic flocking  on a unit sphere and  exponential rendezvous
\[\max_{1\leq i,j\leq N}\|x_i(t)-x_j(t)\|\leq \max_{1\leq i,j\leq N}\|x_i(0)-x_j(0)\|e^{-\delta t }, \]
where $\delta$, $\mathcal{V}^0$, $\E^0$, $\mathcal{D}_x^0$, and  $\mathcal{D}_v^0$ are positive constants depending on $\psi$ and $\sigma$ only.

\end{maintheorem}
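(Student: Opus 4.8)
The plan is to run a continuity (bootstrap) argument driven by the energy dissipation of \eqref{maino}, in which the position and velocity diameters
\[
D_x(t):=\max_{1\le i,j\le N}\|x_i(t)-x_j(t)\|,\qquad D_v(t):=\max_{1\le i,j\le N}\|v_i(t)-v_j(t)\|,
\]
together with the scalar $Q:=(x_i-x_j)\cdot(v_i-v_j)$ evaluated on an extremal pair, are shown to obey a closed system of differential inequalities whose linear part is a stably damped second-order system; smallness of the initial data then forces $D_x$ to decay exponentially. The first ingredient is the energy bound. Since $\|x_i\|\equiv1$ and $v_i\cdot x_i\equiv0$ are preserved along \eqref{maino} (this uses exactly that $\ro$ is orthogonal and sends $x_k\mapsto x_i$), differentiating $\E$ and invoking the modulus conservation $\|\rki(v_k)\|=\|v_k\|$ together with $\psi_{ik}=\psi_{ki}$ reproduces the energy identity recorded in the excerpt, hence $\E(t)\le\E(0)$ for all $t$. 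In particular $\min_k\|v_k(t)\|^2\le\E_K(t)\le\E(0)$, so that $\max_{1\le k\le N}\|v_k(t)\|\le\sqrt{\E(0)}+D_v(t)=:\mathcal V(t)$, an $N$-independent velocity bound once $D_v$ is controlled. I would then fix thresholds $\mathcal D_x^{\ast}<2$ and $\mathcal D_v^{\ast}$, depending on $\psi,\sigma$ only, and let $T^{\ast}$ be the supremum of times on which $D_x<\mathcal D_x^{\ast}$ and $D_v<\mathcal D_v^{\ast}$; on $[0,T^{\ast})$ one has $\psi_{ij}\ge\psi(\mathcal D_x^{\ast})>0$ by $(\mathcal H1)$--$(\mathcal H2)$, $\mathcal V\le\sqrt{\E^0}+\mathcal D_v^{\ast}$, and $\|x_i+x_j\|^2=4-\|x_i-x_j\|^2\ge4-(\mathcal D_x^{\ast})^2$, so the system is smooth there.

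\textbf{The transformed system.} On the extremal pair $(i,j)$ one has $\tfrac{d^+}{dt}D_x\le D_v$ from $\tfrac{d}{dt}\|x_i-x_j\|^2=2Q$, and I would split $\dot v_i-\dot v_j$ into its centripetal, communication and bonding parts. The communication part provides the damping: its Cucker--Smale piece contributes $-\bigl(\tfrac1N\sum_k\psi_{jk}\bigr)\|v_i-v_j\|^2\le-\psi(\mathcal D_x^{\ast})D_v^2$ to $\tfrac{d}{dt}\|v_i-v_j\|^2$, and $-\bigl(\tfrac1N\sum_k\psi_{jk}\bigr)Q$ to $\tfrac{d}{dt}Q$. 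The centripetal term $-\|v_i\|^2x_i+\|v_j\|^2x_j$ and the bonding term give, via $(x_i-x_j)\cdot x_i=\tfrac12\|x_i-x_j\|^2=-(x_i-x_j)\cdot x_j$ and $\langle x_i,x_k\rangle=1-\tfrac12\|x_i-x_k\|^2$, a genuinely restoring term $\lesssim-(\sigma+\|v\|^2)\|x_i-x_j\|^2$ in $\tfrac{d}{dt}Q$. Everything else is of higher order: using $|\psi_{ik}-\psi_{jk}|\le\|\psi'\|_{L^\infty[0,2]}D_x$ (from $(\mathcal H3)$), $v_k\cdot x_k=0$, $1-\langle x_k,x_i\rangle=\tfrac12\|x_k-x_i\|^2$ and $(v_i-v_j)\cdot x_j=v_i\cdot(x_j-x_i)$, each operator-difference contribution, once paired with $v_i-v_j$ or $x_i-x_j$, is $O\bigl(D_x^2(\mathcal V^2+D_v)\bigr)$. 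Collecting terms yields an inhomogeneous system of the schematic form
\[
\tfrac{d^+}{dt}D_x\le D_v,\qquad \tfrac{d}{dt}Q\le D_v^2-c_0D_x^2-\psi(\mathcal D_x^{\ast})Q+O\bigl(D_x^2(\mathcal V^2+D_v)\bigr),\qquad \tfrac{d}{dt}D_v^2\le-2\psi(\mathcal D_x^{\ast})D_v^2+c_1\mathcal V^2D_x^2+O\bigl(D_x^2D_v+\mathcal V^3D_x^3\bigr),
\]
with $c_0,c_1>0$ depending on $\sigma$ and the thresholds only, and $|Q|\le D_xD_v$.

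\textbf{Closing the bootstrap and the rate.} The $D_v^2$-equation slaves $D_v^2\lesssim(\mathcal V^2/\psi(\mathcal D_x^{\ast}))\,D_x^2$ up to an exponentially small transient; feeding this into the $(D_x,Q)$-subsystem and choosing $\E^0,\mathcal D_v^{\ast}$ small enough that $\mathcal V^2/\psi(\mathcal D_x^{\ast})<c_0$, the pair $(\|x_i-x_j\|^2,Q)$ solves a stably damped linear system (morally $\ddot P+\psi(\mathcal D_x^{\ast})\dot P+2c_0'P\le(\text{higher order})$). Hence there exist $\delta=\delta(\psi,\sigma)>0$ and a Lyapunov combination $W\asymp D_x^2$ with $\tfrac{d^+}{dt}W\le-2\delta W$, and Grönwall, after a careful calibration of the coefficients, yields $D_x(t)\le D_x(0)e^{-\delta t}$. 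This decay together with the slaving bound keeps $D_x<\mathcal D_x^{\ast}$ and $D_v<\mathcal D_v^{\ast}$ as long as the data thresholds $\mathcal D_x^0,\mathcal D_v^0,\mathcal V^0,\E^0$ in the hypothesis are taken small enough relative to $\mathcal D_x^{\ast},\mathcal D_v^{\ast}$, so $T^{\ast}=\infty$. Finally $D_x(t)\to0$ and $D_v(t)\to0$ give the velocity alignment of Definition~\ref{def:flo}, while $\liminf_{t\ge0}\min_{i,j}\|x_i(t)+x_j(t)\|\ge\sqrt{4-D_x(0)^2}>0$ gives antipodal avoidance; hence the solution has time-asymptotic flocking (consistent with, and if convenient using, the unconditional velocity-alignment result of \cite{C-K-S}, Theorem~\ref{thm:0}).

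\textbf{Main obstacle.} As the authors flag, the delicate point is the antisymmetric part $x_ix_k^{T}-x_kx_i^{T}$ of $\ro$, which enters $\dot v_i-\dot v_j$ at \emph{first} order in $D_x$, with no $\tfrac12\|x_k-x_i\|^2$ gain (unlike the $(1-\langle x_k,x_i\rangle)$-pieces). The resolution is the modulus/normalization structure: because $\|x_k\|\equiv1$ one has $v_k\cdot x_k=0$, so $(x_ix_k^{T}-x_kx_i^{T})v_k=(x_k\cdot v_k)x_i-(x_i\cdot v_k)x_k=-\bigl((x_i-x_k)\cdot v_k\bigr)x_k$, an $O(D_x\mathcal V)$ vector; and this vector is always paired either with $v_i-v_j$, where $x_k\cdot(v_i-v_j)=(x_k-x_i)\cdot(v_i-v_j)+(x_j-x_i)\cdot v_j=O(D_x)$, or with $x_i-x_j$, where $(x_i-x_j)\cdot x_k=\tfrac12(\|x_j-x_k\|^2-\|x_i-x_k\|^2)=O(D_x^2)$, which, combined with the Lipschitz cancellation $\psi_{ik}-\psi_{jk}=O(D_x)$, drops the net contribution to $O(D_x^2)$. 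Carrying out all such cancellations uniformly in $N$, and propagating them through the damped-oscillator comparison so as to reach the sharp constant $D_x(0)$, is the technical core of the argument.
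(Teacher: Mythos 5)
Your proposal is correct in outline and follows the same architecture as the paper's proof: reduction to the triple $(\|x_i-x_j\|^2,\ \langle v_i-v_j,x_i-x_j\rangle,\ \|v_i-v_j\|^2)$ (the paper's $X^{ij}$ in Proposition~\ref{prop 4.2}), identification of a damped linear part contributed by the communication, centripetal and bonding terms, higher-order estimates on the remainder using $\|x_k\|=1$, $\langle v_k,x_k\rangle=0$ and the isometry of $R$ (the paper's Lemma~\ref{lemma 4.3}), and a bootstrap closed by the energy dissipation of Proposition~\ref{prop:vt}. Two steps are executed differently. For the uniform velocity bound you use the elementary inequality $\max_k\|v_k\|\le\min_k\|v_k\|+\mathcal D_v\le\sqrt{\E(0)}+\mathcal D_v$, whereas the paper proves a separate Gr\"onwall estimate on the maximal speed (Lemma~\ref{lemma 4.1}), needed there precisely because $\mathcal V$ is not monotone under the bonding force; your bound is simpler and suffices since $\mathcal D_v$ is already controlled inside the bootstrap. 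For the stability of the linear part, the paper estimates $\langle X^{ij},AX^{ij}\rangle\le-\mu\|X^{ij}\|^2$ with $-\mu$ the largest real part of the spectrum of $A$; since $A$ is not normal (its symmetric part has a vanishing $(1,1)$ entry and hence a nonnegative eigenvalue), that inequality really requires passing to a norm adapted to $A$, which is exactly what your Lyapunov combination $W\asymp\mathcal D_x^2$ for the damped-oscillator subsystem supplies, so your route is the more robust one at this step. The only point where your sketch is looser is the final prefactor: both arguments naturally deliver $\mathcal D_x(t)\lesssim\|X(0)\|^{1/2}e^{-\delta t}$ rather than literally $\mathcal D_x(0)e^{-\delta t}$, and you rightly flag the calibration needed to reach the sharp constant as the residual technical work.
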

\begin{remark}
$\,$
\begin{enumerate}
\item
The global-in-time existence and uniqueness of the solution to \eqref{maino} is proved in \cite{C-K-S}. 
  \item In \cite{C-K-S2}, we obtain that there is  $\E^0>0$ such that if
  \begin{align}\label{condi_E}
    \E(0)<\E^0,
  \end{align} then the ensemble $\{(x_i,v_i)\}_{i=1}^N $ has an asymptotic rendezvous. Combining  this result and Theorem \ref{thm:1}, we can remove the condition in \eqref{condition for DxDv} and   obtain the exponential convergence result.
      The condition in \eqref{condi_E} is essential since the ensemble satisfying $\frac{1}{N}\sum_{i=1}^Nx_i(0)=0$ and $v_i(0)=0$ has not an asymptotic rendezvous.
  \item From the numerical simulations in Section 5, we can observe that  the convergence rate in Theorem \ref{thm:1} is almost optimal.  See Figure \ref{fig2}.\end{enumerate}

\end{remark}

The rest of this paper is organized as follows. In Section \ref{sec:2}, we review the definition of the flocking on the sphere and provide the derivation of the C-S type model with the inter-particle bonding forces and the properties of rotation operator $\rji$. In Section~\ref{sec:3}, we provide a reduction from \eqref{maino} to an inhomogeneous system of differential equations.   In Section~\ref{sec:4}, we present the proof of the asymptotic convergence result in  Theorem \ref{thm:1} for the system with the inter-particle bonding forces. In Section~\ref{sec:5}, we use numerical simulations to confirm that our analytic results are almost optimal.   Finally, Section \ref{sec:6} is devoted to the summary of our main results.\\

{\bf Notation:}
After normalization, we consider that the domain is a unit sphere $\D$ defined by
\begin{align*}
\mathcal{D} :=\{(a,b,c)\in \bbr^3: a^2+b^2+c^2=1 \}
\end{align*}
and we set
\[ x := (x_1, \ldots, x_N) \in \D^N, \quad v := (v_1, \ldots, v_N) \in \bbr^{3N}. \]
For a given $z_1,z_2 \in \bbr^3$, we use  $\langle z_1,z_2\rangle$ to denote the standard inner product in $\bbr^3$ and the standard symbol
 \[\|z_1\| = \|z_1\|_2=\sqrt{\langle z_1,z_1\rangle}\]
 to denote the $\ell_2$-norm.

\section{Flocking model with Lagrange multiplier and inter-particle bonding forces}
\label{sec:2}
\setcounter{equation}{0}
In this section, we  review the definition of the flocking on the sphere  in general geometrical setting and the results for the rotation operator $\rji$ and the energy functional $\E$ from  \cite{C-K-S}. The properties of the rotation operator and the energy functional are essential ingredient for the proof of our main theorem. \subsection{Relative velocity on a sphere} Unlike the flat space $\bbr^3$, in a general manifold, if two agents have different positions, the corresponding velocities are belonged to different tangent spaces, respectively. Therefore, to give the meaning that velocities in different tangent spaces are aligned, it is necessary to define a kind of transformation  between two different tangent spaces. Thus, we  define the following velocity difference $D_v$ between $v_i$ and $v_j$ at $x_i$  in the most geometrically canonical way:
\[  \rji (v_j(t)) - v_i(t). \]
We note that defining relative velocity is a topic that has received a lot of attention in general manifold theory. In particular, it was covered in depth in  general relativity \cite{Car97, McG03,Sch85, Tal08}. The basic idea is largely similar,  it is a parallel transport along geodesics on a manifold to compare sizes in a tangent space at one position of the manifold \cite{Car97,Spi79b}. From this general observation, a parallel transport on a sphere is characterized by a rotation matrix given in \eqref{eqn:rot}.

The central idea of a relative velocity is to consider the geodesic  for two given points, which is the shortest path between two points. Then, we transport a vector field in a tangent space at one point to the tangent space at another point along the geodesic. Let $M$ be a $n$-dimensional manifold. Note that if $\M = \Rn$, then $T_x \Rn = \Rn$ under the natural identification. For $x \in \M$, the tangent space  $T_x \M$ of $\M$ at $x$ is defined as the set of all tangent vectors of $\M$ at $x$. We say that a vector field $V$ along a curve $\gam : [a,b] \to \M$ is said to be parallel along $\gam$ if $D_t V = 0$ in $[a,b]$. Here, $D_t V$ is the covariant derivative along $\gam$ obtained by the normal projection of $dV_{\gam(t)}/dt$ onto the tangential plane of $\D$ at $\gam(t)$. Note that geodesics in $\R^d$ are straight lines and a constant vector is parallel along a straight line. See  \cite{DoC16, Lee06, Spi79} for the general reference. We recall the existence and uniqueness of the parallel transport along a curve from \cite{Lee06}.
\begin{lemma} \cite[Theorem 4.11]{Lee06}
Given a curve $\gam : [0,t_1] \rightarrow \M$ and a vector $W_0 \in T_{\gam(0)} \M$, there exists a unique parallel vector field $V$ along $\gam$ such that $V_{\gam(0)} = W_0$.
\end{lemma}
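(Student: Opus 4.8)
The plan is to reduce the geometric equation $D_t V = 0$ to a first-order linear system of ordinary differential equations in a local chart, invoke the existence–uniqueness theory for linear ODEs to solve it, and then glue the local solutions along $\gam$ by a compactness argument on $[0,t_1]$.

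First I would work in a coordinate chart. Fix $s \in [0,t_1]$ and choose coordinates $(x^1,\dots,x^n)$ on an open set $U \subset \M$ with $\gam(s) \in U$; by continuity of $\gam$ there is a subinterval $[a,b] \ni s$ with $\gam([a,b]) \subset U$. Writing $V(t) = \sum_{k=1}^n V^k(t)\,\partial_k|_{\gam(t)}$ and $\dot\gam(t) = \sum_{i=1}^n \dot\gam^i(t)\,\partial_i|_{\gam(t)}$, and using $\R$-linearity of $D_t$, the Leibniz rule $D_t(fW) = \dot f\,W + f\,D_t W$, and $D_t(\partial_k\circ\gam) = \nabla_{\dot\gam}\partial_k = \sum_{i,\ell}\dot\gam^i\,\Gam^\ell_{ik}\,\partial_\ell$ expressed through the Christoffel symbols $\Gam^k_{ij}$ of the chart, one computes
\begin{align*}
D_t V(t) = \sum_{k=1}^n \Big( \dot V^k(t) + \sum_{i,j=1}^n \Gam^k_{ij}(\gam(t))\,\dot\gam^i(t)\,V^j(t) \Big)\,\partial_k|_{\gam(t)}.
\end{align*}
Since the $\partial_k|_{\gam(t)}$ are linearly independent, $D_t V \equiv 0$ on $[a,b]$ is equivalent to the homogeneous linear system $\dot V^k(t) = -\sum_{i,j}\Gam^k_{ij}(\gam(t))\,\dot\gam^i(t)\,V^j(t)$, $k=1,\dots,n$, whose coefficients are continuous on $[a,b]$. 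By the standard existence and uniqueness theorem for linear ODE systems, for any prescribed $V(a)$ this system has a unique solution on the \emph{entire} interval $[a,b]$; linearity is essential, since it rules out finite-time blow-up and extends the solution across the whole chart-interval rather than only a small neighborhood of $a$. This yields a unique parallel field along $\gam|_{[a,b]}$ with the given value at $t=a$. In the concrete case $\M = \D$ of this paper, the covariant derivative is the tangential projection $D_t V = \dot V - \langle \dot V, \gam\rangle\gam$, and differentiating $\langle V,\gam\rangle = 0$ shows $D_t V = 0$ is equivalent to the explicit linear ODE $\dot V = -\langle V,\dot\gam\rangle\,\gam$ in $\R^3$, to which the same reasoning applies directly.

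Finally I would patch. Since $[0,t_1]$ is compact and $\gam$ is continuous, $\gam([0,t_1])$ is covered by finitely many coordinate charts, and the Lebesgue number lemma yields a partition $0 = s_0 < s_1 < \cdots < s_m = t_1$ such that each $\gam([s_{\ell-1},s_\ell])$ lies in a single chart. Starting from $V(s_0) = W_0$, I solve the linear system on $[s_0,s_1]$, take the endpoint value $V(s_1)$ as initial datum on $[s_1,s_2]$, and continue successively; the pieces agree at the nodes by construction and assemble into a parallel field $V$ on all of $[0,t_1]$ with $V_{\gam(0)} = W_0$. Global uniqueness follows from uniqueness on each subinterval: any parallel $\widetilde V$ satisfies the same linear ODE there and, inductively, the same initial datum at each node, so $\widetilde V = V$.

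The main obstacle is not a hard estimate but the clean execution of two routine-but-essential points: the faithful translation of the invariant condition $D_t V = 0$ into the coordinate ODE (the genuinely geometric step), and the observation that \emph{linearity} of that ODE promotes local solvability to solvability on the full chart-interval, which is precisely what makes the compactness-and-patching step yield a field defined along the entire curve.
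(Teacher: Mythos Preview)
Your argument is correct and is precisely the standard proof one finds in Lee's textbook: reduce $D_t V=0$ to a linear first-order ODE system in local coordinates via the Christoffel symbols, invoke global existence and uniqueness for linear systems, and patch across finitely many chart intervals by compactness of $[0,t_1]$. The paper itself gives no proof of this lemma---it is simply quoted as \cite[Theorem 4.11]{Lee06}---so there is nothing to compare beyond noting that you have reproduced the cited textbook argument.
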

Note that a geodesic between two points $z_1$ and $z_2$ on a sphere is a part of a great circle containing $z_1$ and $z_2$. Also, it is uniquely determined unless $z_1 = -z_2$. It is well-known that the parallel transport of a vector $W_0$ along a great circle is given by $RW_0$ for some rotation matrix $R$. See Chapter 4.4 in \cite{DoC16}. We prove the following proposition for the sake of completeness.

\begin{proposition}
Let $\gam : [0,t_1] \rightarrow \D$ be a geodesic on a sphere and $W_0 \in T_{\gam(0)} \D$. If
\begin{align}
\label{eqn:1geo}
\gam(0) + \gam(t) \neq 0
\end{align}
for all $t \in [0,t_1]$, then a vector field defined by
\begin{align}
\label{eqn:2geo}
V_{\gam(t)} := R_{\gam(0) \rightarrow \gam(t)} W_0
\end{align}
is parallel along $\gam$, where $R$ is the rotation operator given in \eqref{eqn:rot}.
\end{proposition}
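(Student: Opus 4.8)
The plan is to reduce the statement to an explicit computation by parametrizing the geodesic $\gamma$ as an arc of a great circle. If $\gamma'(0)=0$ then $\gamma$ is constant and there is nothing to prove, so assume $\omega:=\|\gamma'(0)\|>0$. Put $p:=\gamma(0)$, $q:=\gamma'(0)/\omega$, and $n:=p\times q$; then $\{p,q,n\}$ is an orthonormal basis of $\bbr^3$, and since $\gamma$ is a constant-speed geodesic on the unit sphere we have $\gamma(t)=\cos(\omega t)\,p+\sin(\omega t)\,q$ for $t\in[0,t_1]$. Recall that $V$ is parallel along $\gamma$ precisely when the covariant derivative $D_tV_{\gamma(t)}$ vanishes, and on the sphere $D_tV_{\gamma(t)}$ is the tangential part of $\tfrac{d}{dt}V_{\gamma(t)}$; hence it suffices to show that $\tfrac{d}{dt}V_{\gamma(t)}$ is a scalar multiple of $\gamma(t)$ (the outward unit normal at $\gamma(t)$) for every $t\in[0,t_1]$.

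Next I would compute $R_{\gamma(0)\rightarrow\gamma(t)}=R(\gamma(0),\gamma(t))$ in the frame $\{p,q,n\}$. From the parametrization, $\langle\gamma(0),\gamma(t)\rangle=\cos(\omega t)$ and $\gamma(0)\times\gamma(t)=\sin(\omega t)\,n$, so the rank-one term in \eqref{eqn:rot} equals $\bigl(1-\cos(\omega t)\bigr)\,n n^T$ whenever $\sin(\omega t)\neq0$; its coefficient vanishes as $\gamma(t)\to\gamma(0)$, so $R(\gamma(0),\gamma(t))$ extends continuously through such points, while hypothesis \eqref{eqn:1geo} rules out the only genuine singularity, namely $\gamma(t)=-\gamma(0)$. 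Applying \eqref{eqn:rot} to the basis vectors then gives
\begin{align*}
R(\gamma(0),\gamma(t))\,p=\gamma(t),\qquad R(\gamma(0),\gamma(t))\,q=-\sin(\omega t)\,p+\cos(\omega t)\,q,\qquad R(\gamma(0),\gamma(t))\,n=n,
\end{align*}
so that $R_{\gamma(0)\rightarrow\gamma(t)}$ acts as the rotation by angle $\omega t$ in the $p$–$q$ plane fixing the $n$-axis; in particular $R_{\gamma(0)\rightarrow\gamma(0)}=I$, whence $V_{\gamma(0)}=W_0$.

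To conclude, write $W_0\in T_{\gamma(0)}\D=T_p\D$ as $W_0=\beta q+\kappa n$ (the condition $\langle W_0,p\rangle=0$ forces the absence of a $p$-component). By the previous step,
\begin{align*}
V_{\gamma(t)}=R_{\gamma(0)\rightarrow\gamma(t)}W_0=\beta\bigl(-\sin(\omega t)\,p+\cos(\omega t)\,q\bigr)+\kappa n,
\end{align*}
and differentiating gives $\tfrac{d}{dt}V_{\gamma(t)}=-\beta\omega\bigl(\cos(\omega t)\,p+\sin(\omega t)\,q\bigr)=-\beta\omega\,\gamma(t)$, which is normal to the sphere at $\gamma(t)$. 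Hence $D_tV_{\gamma(t)}=0$ on $[0,t_1]$, i.e. $V$ is parallel along $\gamma$, as claimed.

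There is no deep obstacle here; the entire argument is a short computation once the great-circle parametrization is fixed. The points requiring care are purely bookkeeping: getting the action of the operator $R(x_k,x_i)$ from \eqref{eqn:rot} on the moving frame right, including the transport-direction convention, and handling the rank-one term where the great circle passes through $\gamma(0)$, where $\sin(\omega t)=0$ leaves $\tfrac{\gamma(0)\times\gamma(t)}{|\gamma(0)\times\gamma(t)|}$ undefined but the vanishing coefficient $1-\cos(\omega t)$ keeps $R$ continuous — which is exactly why hypothesis \eqref{eqn:1geo}, excluding only the antipodal configuration, is the appropriate assumption.
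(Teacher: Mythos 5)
Your proof is correct and follows essentially the same route as the paper's: parametrize the geodesic as a great circle, compute the action of $R_{\gamma(0)\rightarrow\gamma(t)}$ on an adapted orthonormal frame to see it is the rotation by $\omega t$ in the plane of the great circle fixing the normal $n$, and observe that the derivative of the transported field is proportional to $\gamma(t)$, hence has vanishing tangential part. The only cosmetic difference is that the paper first uses the symmetry of the sphere to reduce to the standard geodesic $t\mapsto(\cos t,\sin t,0)$ and the frame $\{e_2,e_3\}$, whereas you carry out the same computation in a general frame $\{p,q,n\}$.
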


\begin{proof}
By the symmetry of a sphere and the condition in \eqref{eqn:1geo}, it is enough to consider a geodesic $\gam : [0,\theta_1] \rightarrow \D$ from $e_1$ to $z_1$ for some $\theta_1 \in (0,\pi)$ given by
\begin{align*}
\gam(t) = (\cos(t), \sin(t), 0),
\end{align*}
where $e_1 := (1,0,0)$, $ e_2:=(0,1,0)$, $e_3 := (0,0,1)$, and $z_1 := (\cos(\theta_1), \sin(\theta_1),0)$.

  We show that $R_{\gam(0) \rightarrow \gam(t)} e_i$ is parallel along $\gam$ for all $t \in [0,\theta_1]$ and $i \in \{2,3\}$.
By the direct computation, it holds that
\begin{align}
\label{eqn:2ex1}
\rji =
\begin{bmatrix}
\cos(\alpha_i - \alpha_j) & -\sin(\alpha_i - \alpha_j) &0  \\
\sin(\alpha_i - \alpha_j) & \cos(\alpha_i - \alpha_j)  &0 \\
0 & 0 & 1
\end{bmatrix}.
\end{align}
From \eqref{eqn:1geo}, we can use \eqref{eqn:2ex1} to obtain
\begin{align*}
R_{\gam(0) \rightarrow \gam(t)} e_2 = (-\sin(t), \cos(t), 0) \quad\hbox{ and }\quad R_{\gam(0) \rightarrow \gam(t)} e_3 = e_3.
\end{align*}

Thus, it follows that
\begin{align*}
\frac{d R_{\gam(0) \rightarrow \gam(t)} e_2}{dt} = - \gam(t) \quad \hbox{ and }\quad \frac{d R_{\gam(0) \rightarrow \gam(t)} e_3}{dt} = 0.
\end{align*}
As the covariant derivatives are tangential components of above equations, $D_t R_{\gam(0) \rightarrow \gam(t)} e_i$ is zero for all $t \in [0,\theta_1]$ and $i \in \{2,3\}$. Note that $\{e_2, e_3\}$ is a basis of $T_{e_1}\D$. Therefore, $W_0$ can be written as a linear combination of $e_2$ and $e_3$ and we conclude that a vector field given in \eqref{eqn:2geo} is parallel along $\gam$.
\end{proof}

We emphasize that the rotation operator $\ro$ is an isometry as well as a bijection between two tangent spaces. We also present properties of the operator $\ro$ for use in the next sections.
\begin{lemma}
\label{lem:rot}
For $z_1, z_2 \in \D$ such that $z_1 \neq z_2$ , $z_1 \neq -z_2$ and $\rot$ given in \eqref{eqn:rot}, it holds that
\begin{align}
\label{eqn:1rot}
R_{z_1\rightarrow z_2}(z_1) = z_2, \quad R_{z_1\rightarrow z_2}(z_2) = 2\langle z_1, z_2\rangle  z_2 - z_1, ~ \hbox{ and }~  R_{z_1\rightarrow z_2}(z_1 \times z_2) = z_1 \times z_2.
\end{align}
Furthermore, we have
\begin{align}
\label{eqn:2rot}
R_{z_1\rightarrow z_2}^{T} = R_{z_2\rightarrow z_1}. 
\end{align}
In particular, $\rot$ is an orthogonal matrix, that is
\begin{align}
\label{eqn:3rot}
\rott \rot = I.
\end{align}
\end{lemma}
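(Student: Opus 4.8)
The plan is to verify each identity by a direct computation, exploiting the special structure of $R(z_1,z_2)$ in \eqref{eqn:rot}. Since $R(z_1,z_2)$ is built from $I$, the rank-one pieces $z_2 z_1^T$, $z_1 z_2^T$, and the rank-one piece associated with the unit normal $n := (z_1\times z_2)/|z_1\times z_2|$, everything reduces to computing how these act on the three vectors $z_1$, $z_2$, and $z_1\times z_2$. Write $c := \langle z_1,z_2\rangle$ and recall that for unit vectors $z_1,z_2$ we have $|z_1\times z_2|^2 = 1-c^2$, that $n$ is orthogonal to both $z_1$ and $z_2$, and that $\langle z_1,z_1\rangle = \langle z_2,z_2\rangle = 1$.

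First I would prove \eqref{eqn:1rot}. For $R_{z_1\rightarrow z_2}(z_1)$: applying each term of \eqref{eqn:rot} to $z_1$ gives $c\,z_1$ from the $cI$ term, $z_1\langle z_k,z_1\rangle$-type contributions from the rank-one terms — explicitly $z_1(z_2^T z_1) - z_2(z_1^T z_1) = c\,z_1 - z_2$ — and $0$ from the $n n^T$ term since $n\perp z_1$. (Here I must be careful with the labelling: the operator is $R_{x_k\to x_i} = R(x_k,x_i)$, so I should apply \eqref{eqn:rot} with the roles matched to the statement; a clean way is to just fix $R(z_1,z_2)$ explicitly and track indices.) Summing yields $c z_1 + (c z_1 - z_2) + (1-c)\cdot 0$; regrouping constants with care gives $z_2$. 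For $R_{z_1\rightarrow z_2}(z_2)$: the $cI$ term gives $c z_2$, the rank-one terms give $z_1(z_2^T z_2) - z_2(z_1^T z_2) = z_1 - c z_2$, and the $nn^T$ term gives $0$ again; summing and regrouping produces $2c\,z_2 - z_1$. For $R_{z_1\rightarrow z_2}(z_1\times z_2)$: note $z_1\times z_2 = |z_1\times z_2|\,n$, the $cI$ term gives $c\,(z_1\times z_2)$, the rank-one $z$-terms annihilate it (since $z_1,z_2 \perp z_1\times z_2$), and the $nn^T$ term gives $(1-c)\,n\,\langle n, z_1\times z_2\rangle = (1-c)(z_1\times z_2)$; the sum is $(z_1\times z_2)$.

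Next, for \eqref{eqn:2rot}, I would transpose \eqref{eqn:rot} termwise: $(cI)^T = cI$, $(z_i z_k^T - z_k z_i^T)^T = z_k z_i^T - z_i z_k^T$, and $(nn^T)^T = nn^T$, while $n$ is symmetric in $z_1,z_2$ up to sign and $nn^T$ is insensitive to that sign; comparing with the formula for $R_{z_2\to z_1}$ shows the only change is the swap of the antisymmetric part, which is exactly the transpose. Finally, \eqref{eqn:3rot} follows from \eqref{eqn:2rot} once I check $R_{z_2\to z_1} R_{z_1\to z_2} = I$; the cleanest route is to observe that $\{z_1, z_2 - c z_1, n\}$ (a Gram–Schmidt-type frame, valid since $z_1\neq\pm z_2$) or simply the frame $\{z_1, z_1\times z_2, z_1\times(z_1\times z_2)\}$ spans $\R^3$, and use \eqref{eqn:1rot} to track the action of $R_{z_1\to z_2}$ then $R_{z_2\to z_1}$ on each basis vector, confirming the composition is the identity; alternatively, since $R$ is an orientation-preserving isometry sending the orthonormal-ish data at $z_1$ to that at $z_2$, it is automatically orthogonal. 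I expect the main (mild) obstacle to be bookkeeping: keeping the index order in $R_{x_k\to x_i}=R(x_k,x_i)$ consistent with \eqref{eqn:rot} throughout, and correctly regrouping scalar multiples of $z_1,z_2$ when simplifying $R_{z_1\to z_2}(z_2)$ — there is no conceptual difficulty, only the risk of a sign or transposition slip, which the three test vectors $z_1, z_2, z_1\times z_2$ forming a basis conveniently let me cross-check.
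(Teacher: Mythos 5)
Your overall strategy --- verify the three identities in \eqref{eqn:1rot} by applying the formula \eqref{eqn:rot} term by term to $z_1$, $z_2$, $z_1\times z_2$, get \eqref{eqn:2rot} by transposing termwise and using that $(z_2\times z_1)(z_2\times z_1)^T=(z_1\times z_2)(z_1\times z_2)^T$, and then deduce \eqref{eqn:3rot} by checking $R_{z_2\rightarrow z_1}R_{z_1\rightarrow z_2}=I$ on the basis $\{z_1,z_2,z_1\times z_2\}$ --- is sound and is the natural direct verification; the paper itself gives no argument here (it defers to Lemma 2.4 of the cited reference), so there is nothing to diverge from.

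However, your intermediate computations contain exactly the index slip you warned yourself about. With the paper's convention $R_{x_k\rightarrow x_i}=R(x_k,x_i)$, the operator in the lemma is $R_{z_1\rightarrow z_2}=c\,I+z_2z_1^T-z_1z_2^T+(1-c)\,nn^T$ with $c=\langle z_1,z_2\rangle$, so the rank-one contribution to $R_{z_1\rightarrow z_2}(z_1)$ is $z_2(z_1^Tz_1)-z_1(z_2^Tz_1)=z_2-c\,z_1$, giving $c\,z_1+(z_2-c\,z_1)+0=z_2$ with no ``regrouping'' needed. What you actually wrote, $z_1(z_2^Tz_1)-z_2(z_1^Tz_1)=c\,z_1-z_2$, is the action of $R(z_2,z_1)=R_{z_2\rightarrow z_1}$, and the sum as you display it, $c\,z_1+(c\,z_1-z_2)$, equals $2c\,z_1-z_2$, not $z_2$. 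The same transposition occurs in your computation of $R_{z_1\rightarrow z_2}(z_2)$: your terms sum to $c\,z_2+(z_1-c\,z_2)=z_1$, not the claimed $2c\,z_2-z_1$; with the correct order one gets $c\,z_2+z_2(z_1^Tz_2)-z_1(z_2^Tz_2)=2c\,z_2-z_1$ as required. These are fixable bookkeeping errors rather than a wrong idea --- the stated conclusions are the correct ones, and the remainder of the argument (the $nn^T$ term fixing $z_1\times z_2$, the transpose identity, and the basis check for orthogonality, which is legitimate since $z_1\neq\pm z_2$ makes $\{z_1,z_2,z_1\times z_2\}$ a basis of $\mathbb{R}^3$) goes through --- but as written the two displayed sums do not equal what you assert they equal, so the computation must be redone with the rank-one terms in the order dictated by \eqref{eqn:rot}.
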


\begin{proof}
For the proof of this lemma, see Lemma 2.4 in \cite{C-K-S}.
%
%
%
%
\end{proof}

\begin{proposition}
\label{prop:tan}
$R_{z_1\rightarrow z_2} |_{T_{z_1} \D}$ is a bijection and an isometry from $T_{z_1} \D$ to $T_{z_2} \D$.
\end{proposition}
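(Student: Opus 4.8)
The plan is to prove Proposition~\ref{prop:tan} by combining the algebraic identities for $\rot$ collected in Lemma~\ref{lem:rot} with the basic linear algebra of orthogonal maps. The statement has two parts: that $R_{z_1\to z_2}$ maps $T_{z_1}\D$ \emph{into} $T_{z_2}\D$, and that its restriction to $T_{z_1}\D$ is a bijective isometry onto $T_{z_2}\D$. First I would recall that for a point $z\in\D$ the tangent space is $T_z\D=\{w\in\bbr^3:\langle w,z\rangle=0\}=z^\perp$, a two-dimensional subspace. So the claim ``$R_{z_1\to z_2}$ maps $T_{z_1}\D$ into $T_{z_2}\D$'' amounts to: if $\langle w,z_1\rangle=0$ then $\langle R_{z_1\to z_2}w,z_2\rangle=0$.

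The key computation is: for any $w\in\bbr^3$,
\begin{align*}
\langle R_{z_1\to z_2}w, z_2\rangle = \langle w, R_{z_1\to z_2}^T z_2\rangle = \langle w, R_{z_2\to z_1}(z_2)\rangle = \langle w, z_1\rangle,
\end{align*}
where the second equality uses \eqref{eqn:2rot} and the third uses the first identity in \eqref{eqn:1rot} (with the roles of $z_1,z_2$ swapped, which is legitimate since $z_2\ne z_1$, $z_2\ne -z_1$ as well). Hence $w\in z_1^\perp=T_{z_1}\D$ implies $R_{z_1\to z_2}w\in z_2^\perp=T_{z_2}\D$, proving the restriction is well-defined as a map $T_{z_1}\D\to T_{z_2}\D$. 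Isometry is immediate from \eqref{eqn:3rot}: $\|R_{z_1\to z_2}w\|^2=w^T R_{z_1\to z_2}^T R_{z_1\to z_2}w=w^Tw=\|w\|^2$ for all $w$, in particular on $T_{z_1}\D$; and an orthogonal matrix is invertible, so $R_{z_1\to z_2}$ is injective on all of $\bbr^3$, hence injective on $T_{z_1}\D$. Finally, for surjectivity onto $T_{z_2}\D$: since both tangent spaces are two-dimensional and the restricted map is an injective linear map $T_{z_1}\D\to T_{z_2}\D$ between spaces of equal finite dimension, it is automatically surjective; alternatively one exhibits the inverse explicitly as $R_{z_2\to z_1}|_{T_{z_2}\D}$, using the same argument with $z_1,z_2$ interchanged together with $R_{z_2\to z_1}R_{z_1\to z_2}=R_{z_1\to z_2}^TR_{z_1\to z_2}=I$.

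I do not anticipate a serious obstacle here — the proposition is essentially a corollary of Lemma~\ref{lem:rot}, and the only mild point to be careful about is checking that the hypotheses ``$z_1\ne z_2$, $z_1\ne -z_2$'' are symmetric in $z_1,z_2$ so that Lemma~\ref{lem:rot} and \eqref{eqn:1rot} may be applied with the two points exchanged. I would also remark in passing (for use later, and because it is the geometric content) that this is consistent with the preceding proposition: $R_{\gam(0)\to\gam(t)}$ is the parallel transport along the geodesic, which is well known to be a linear isometry between tangent spaces. The write-up should be short: state $T_z\D=z^\perp$, do the three-line inner-product computation for well-definedness, invoke \eqref{eqn:3rot} for isometry and injectivity, and conclude surjectivity by dimension count.
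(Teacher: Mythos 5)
Your proposal is correct and follows essentially the same route as the paper: the identical inner-product computation $\langle R_{z_1\to z_2}w,z_2\rangle=\langle w,R_{z_2\to z_1}(z_2)\rangle=\langle w,z_1\rangle$ via \eqref{eqn:1rot}--\eqref{eqn:2rot} establishes that tangent vectors map to tangent vectors, and orthogonality \eqref{eqn:3rot} gives the isometry and injectivity. The only cosmetic difference is that for surjectivity the paper runs the same inner-product argument for the inverse map $R_{z_1\to z_2}^{-1}$, whereas you also offer the equivalent dimension-count shortcut.
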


\begin{proof}
From \eqref{eqn:1rot} and \eqref{eqn:2rot} in Lemma~\ref{lem:rot}, it holds that for any $v \in \R^3$,
\begin{align}
\label{eqn:tan11}
\langle \rot (v), z_2\rangle  = v^T \rott z_2 = v^T \rotr z_2 = v^T z_1=\langle v, z_1\rangle.
\end{align}
As $\D$ is a unit sphere, we have
\begin{align}
\label{eqn:tan12}
\langle v,z_1\rangle  = 0 \quad \hbox{ for any vector } v \in T_{z_1} \D.
\end{align}
From \eqref{eqn:tan11} and \eqref{eqn:tan12}, we conclude that  $\langle \rot (v),z_2\rangle = 0$ for any vector $v \in T_{z_1} \D$ and thus
\[\rot (v) \in T_{z_2} \D.\]

Furthermore, as $\rot$ is an orthogonal matrix from Lemma~\ref{lem:rot}, it is invertible. By \eqref{eqn:2rot} and \eqref{eqn:3rot} in Lemma~\ref{lem:rot}, it holds that for any $v \in \R^3$,
\begin{align*}
\langle \roti (v),z_1\rangle  =\langle  v,z_2\rangle.
\end{align*}
From \eqref{eqn:tan11}, $\roti (v) \in T_{z_1} \D$ for any vector $v \in T_{z_2} \D$ and we conclude that $\rot$ is a bijection.

Lastly, as $\rot$ is an orthogonal matrix from Lemma~\ref{lem:rot}, it is an isometry.
\end{proof}

\subsection{Lagrange multiplier and energy dissipation on a sphere}
The C-S type flocking model on a sphere in \eqref{maino} has the following property of the velocity alignment:
\begin{align*}
\lim_{t \rightarrow \infty} \max_{1\leq i,j\leq N} \|x_i+x_j\|\| \rji (v_j(t)) - v_i(t) \| = 0.
\end{align*}
Here, $\|x_i+x_j\|$ term in the above flocking limit naturally appear from the geometric structure of the sphere. See \cite{C-K-S} for the detailed argument. Notice that if we assume that  there is a constant $C_l>0$ such that $\|x_i+x_j\|\geq C_l$  for any $t>0$ and $i,j\in \bbn$, then the above limit is equivalent to
\begin{align*}
\lim_{t \rightarrow \infty} \max_{1\leq i,j\leq N} \| \rji (v_j(t)) - v_i(t) \| = 0
\end{align*}
as  the form of the velocity alignment  in the flat space.

If the bonding force rate $\sigma$ is large enough comparing the differences of agents' velocities and positions, then the following flocking result with position alignment  holds.

\begin{theorem}\cite{C-K-S}
\label{thm:0}
Assume that $\psi_{ij}$ satisfies $(\mathcal{H}1)$-$(\mathcal{H}3)$. If $2\sigma > N^2\E(0)$, then the solution to \eqref{maino} has time-asymptotic flocking  on a unit sphere. Here, $\E(0)$ is the initial energy of the system given in \eqref{eqn:e}.
\end{theorem}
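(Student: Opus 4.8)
The plan is to run a continuity (bootstrap) argument on the scalar quantities
\[
D_x(t):=\max_{i,j}\|x_i(t)-x_j(t)\|,\qquad D_v(t):=\max_{i,j}\|v_i(t)-v_j(t)\|,
\]
together with $V(t):=\max_k\|v_k(t)\|$, after converting the per-pair dynamics of \eqref{maino} into a dissipative differential system. The starting point is the energy inequality from \cite{C-K-S}, which gives $\E(t)\le\E(0)$ for all $t\ge 0$. Since $\E_K(t)=\frac1N\sum_k\|v_k(t)\|^2\le\E(t)$ and the quadratic mean dominates the minimum, $\min_\ell\|v_\ell(t)\|\le\sqrt{\E(0)}$, hence $V(t)\le\sqrt{\E(0)}+D_v(t)$. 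Thus, on any interval where $D_v$ stays small, $V$ is automatically bounded by a constant depending only on $\E^0$ and the barrier for $D_v$; this is the uniform velocity bound "obtained by energy dissipation'' referred to in the abstract, and it is here that the $N$-dependence disappears, because the energy is a genuine average.

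Next I would fix a pair $(i,j)$ and differentiate $a_{ij}:=\|x_i-x_j\|^2$, $b_{ij}:=\langle x_i-x_j,v_i-v_j\rangle$, $c_{ij}:=\|v_i-v_j\|^2$ using \eqref{mainoa}--\eqref{mainob}. The Lagrange-multiplier term $-\|v_i\|^2 x_i$ contributes nothing to $b_{ij}$ because $\langle x_i,v_i\rangle=0$ on the unit sphere, and is of size $O(VD_v+V^2D_x)$ elsewhere; the bonding term, after the identity $\langle x_i,x_k\rangle=1-\tfrac12\|x_i-x_k\|^2$, equals $-\sigma(x_i-x_j)$ up to an $O(\sigma D_x^2)$ remainder; and the communication term, after writing $R_{x_k\rightarrow x_i}=I+(R_{x_k\rightarrow x_i}-I)$ with $\|R_{x_k\rightarrow x_i}-I\|\lesssim\|x_k-x_i\|$ (read off from \eqref{eqn:rot}, the linear part being exactly the antisymmetric piece $x_ix_k^T-x_kx_i^T$) and using the $C^1$-bound $|\psi_{ik}-\psi_{jk}|\le\|\psi'\|_\infty\|x_i-x_j\|$ from $(\mathcal{H}3)$, equals $-\psi^{*}(v_i-v_j)$ for some $\psi^{*}$ lying in $[\psi(D_x),\psi(0)]$, up to an $O\big((\psi(0)V+\|\psi'\|_\infty D_v)D_x\big)$ remainder; the isometry of $R$ from Lemma~\ref{lem:rot} and Proposition~\ref{prop:tan} is what keeps $\|R_{x_k\rightarrow x_i}v_k\|=\|v_k\|\le V$ and prevents amplification. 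Collecting terms for the pair realizing each maximum and using the standard rule for differentiating a maximum of smooth functions, one obtains the transformed system
\[
\frac{d}{dt}D_x^2=2b_{ij},\qquad \frac{d}{dt}b_{ij}=-\sigma D_x^2-\psi^{*}b_{ij}+D_v^2+\mathcal R_1,\qquad \frac{d}{dt}D_v^2=-2\psi^{*}D_v^2-2\sigma b_{ij}+\mathcal R_2,
\]
where $\psi^{*}\ge\psi(D_x)>0$ as long as $D_x<2$ (by $(\mathcal{H}2)$), and $|\mathcal R_1|,|\mathcal R_2|\le C(\psi,\sigma)\,(D_x+D_v)(D_x^2+D_v^2)$ once the velocity bound above is inserted.

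The heart of the matter is then to show that
\[
\mathcal L(t):=\max_{i,j}\Big[\sigma\|x_i-x_j\|^2+\|v_i-v_j\|^2+2\varepsilon\,\langle x_i-x_j,v_i-v_j\rangle\Big],
\]
with $\varepsilon>0$ small depending on $\psi$ and $\sigma$, is comparable to $\sigma D_x^2+D_v^2$ and satisfies $\frac{d}{dt}\mathcal L\le-2\delta\mathcal L$ whenever $D_x,D_v$ lie below explicit thresholds: substituting the transformed system gives a leading part $-2\psi^{*}D_v^2-2\varepsilon\sigma D_x^2-2\varepsilon\psi^{*}b_{ij}+2\varepsilon D_v^2$, and Young's inequality on the cross term, together with smallness of $\varepsilon$ and of $\mathcal R_1,\mathcal R_2$, closes the sign with $\delta$ depending only on $\psi(D_x)\ge\psi(\mathcal D_x^0)$, $\psi(0)$ and $\sigma$. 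A continuity argument then finishes: let $T^{*}$ be the last time the thresholds hold; on $[0,T^{*})$ the energy bound yields the velocity bound, hence the differential inequality, hence $\mathcal L(t)\le\mathcal L(0)e^{-2\delta t}$, which forces $D_x,D_v$ to stay strictly below the thresholds, so $T^{*}=\infty$. From $\mathcal L(t)\le\mathcal L(0)e^{-2\delta t}$ one reads off exponential decay of $D_x$ (the prefactor being sharpened to $\max_{i,j}\|x_i(0)-x_j(0)\|$ by optimizing $\varepsilon$ and the thresholds and by feeding back that $D_v$ decays at least as fast), $D_x(t)<2$ for all $t$ giving antipodal-point avoidance through $\|x_i+x_j\|^2=4-\|x_i-x_j\|^2$, and velocity alignment since $\|x_i+x_j\|\,\|R_{x_j\rightarrow x_i}v_j-v_i\|\lesssim D_xV+D_v\to0$.

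The main obstacle I expect is the bookkeeping of the geometric remainders $\mathcal R_1,\mathcal R_2$: one must check that every correction coming from the curvature term $-\|v_i\|^2x_i$, from replacing $R_{x_k\rightarrow x_i}$ by the identity, and from replacing $x_k-\langle x_i,x_k\rangle x_i$ by $x_i-x_k$, is of strictly higher order in $(D_x,D_v)$, so that after the bound $V\le\sqrt{\E^0}+\mathcal D_v^0$ is used they are dominated by the negative-definite leading part of $\frac{d}{dt}\mathcal L$ with room to spare. A secondary, but essential, technical point is to verify that each constant arising is either an average $\frac1N\sum_k(\cdot)$ of per-agent quantities or is bounded by $\psi(0)$, so that $\delta,\mathcal V^0,\E^0,\mathcal D_x^0,\mathcal D_v^0$ genuinely depend on $\psi$ and $\sigma$ only; this is exactly where the modulus-conservation property of $R$ via \eqref{R3} is used, since it is what stops the communication term from ever enlarging velocities.
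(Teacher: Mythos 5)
You have written a proof of (essentially) Theorem~\ref{thm:1}, not of Theorem~\ref{thm:0}, and the mismatch is fatal as a proof of the stated result. The hypothesis of Theorem~\ref{thm:0} is the single energy condition $2\sigma > N^2\E(0)$; it contains no smallness assumption on $\mathcal{D}_x(0)$, $\mathcal{D}_v(0)$, or $\mathcal{V}(0)$ relative to thresholds depending only on $\psi$ and $\sigma$. Your continuity argument needs exactly such thresholds to get started (your $\mathcal{L}$ must begin, and remain, below the barriers at which the remainders $\mathcal{R}_1,\mathcal{R}_2$ are dominated), and the energy condition does not supply them: from $\E_C(0)\le\E(0)<2\sigma/N^2$ one only gets $\|x_i(0)-x_j(0)\|^2<2$, a fixed bound independent of $\psi$ and $\sigma$, and from $\E_K(0)<2\sigma/N^2$ one only gets $\mathcal{V}(0)<\sqrt{2\sigma/N}$, which for fixed $N$ need not be below any $\psi,\sigma$-dependent barrier. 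So the bootstrap cannot be initialized under the stated hypothesis; conversely, the exponential rendezvous you aim for is not what Theorem~\ref{thm:0} claims (its conclusion is only time-asymptotic flocking in the sense of Definition~\ref{def:flo}: velocity alignment plus antipodal avoidance).

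The proof the paper relies on (cited from \cite{C-K-S} and sketched in the introduction) is much softer and uses the hypothesis exactly once, through monotonicity of the energy. Since $\E(t)\le\E(0)$ by \eqref{eqn:2vt} and $\E_C(t)\ge\frac{\sigma}{N^2}\|x_i(t)-x_j(t)\|^2$ for every pair, the condition $2\sigma>N^2\E(0)$ gives $\|x_i(t)-x_j(t)\|^2\le N^2\E(0)/\sigma<2$ for all $t$, hence $\|x_i(t)+x_j(t)\|^2=4-\|x_i(t)-x_j(t)\|^2>2$, which is the antipodal avoidance (and a uniform lower bound keeping $dR/dt$ nonsingular). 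Velocity alignment then follows not from a decaying Lyapunov functional but from the integrability statement in \eqref{eqn:2vt}, $\int_0^\infty\sum_{i,j}\frac{\psi_{ij}}{N^2}\|R_{x_j\to x_i}(v_j)-v_i\|^2\,dt<\infty$, combined with uniform Lipschitz bounds on the integrand (available once the antipodal bound is in hand) and Barbalat's lemma; no exponential rate and no smallness of diameters is needed or obtained. Your machinery --- the quantities $a_{ij},b_{ij},c_{ij}$, the negative-definite leading matrix, the $\varepsilon$-modified Lyapunov functional, and the continuity argument --- is the right toolkit for Theorem~\ref{thm:1} (and indeed mirrors Proposition~\ref{prop 4.2} and Section~\ref{sec:4}), but it does not prove Theorem~\ref{thm:0}.
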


 We revisit the idea for deriving the flocking model introduced in \cite{C-K-S}.  Then, the centripetal force term of the flocking model will be explained using the Lagrange Multiplier, and the inter-particle bonding force term corresponding to the sphere will be defined.  For the consistency of initial data on the unit sphere, we consider  following initial conditions:
\begin{align}
\label{eqn:ini}
\langle v_i(0),x_i(0)\rangle=0\quad \hbox{ and }\quad \|x_i(0)\|= 1, \quad \mbox{for all}~ 1\leq i\leq N.
\end{align}

The augmented C-S model in the Euclidean space (See \cite{PKH10}), the following term is added as inter-particle bonding forces:
 \[\displaystyle \frac{\sigma}{N} \sum_{k=1}^N (x_k - x_i).\]
Here, $\sigma > 0$ is the rate of the inter-particle bonding force. However, this term will prevent that the agent is located in the sphere. Thus, we adapt a modified inter-particle bonding force in \cite{C-K-S}.  We notice that the modified inter-particle bonding forces will be the form of Lohe operator in \cite{Lohe}. In summary,  we consider the following model with the Lagrange multiplier $\lambda_i x_i$ for the controllability of position difference.
\begin{subequations}
\label{maini}
\begin{align}
\label{mainia}
\dot{x}_i&=v_i,\\
\label{mainib}
\dot{v}_i&= \lam_i x_i +\sum_{k=1}^N\frac{\psi_{ik}}{N}(R_{x_k \rightarrow x_i} (v_k)- v_i)+ \frac{\sigma}{N} \sum_{k=1}^N (x_k -  x_i),
\end{align}
\end{subequations}
where $R_{x_k \rightarrow x_i} $ is an operator from $T_{x_j} \D$ to $T_{x_i} \D$.

Based on the idea of \cite[Proposition 2.2]{C-K-S}, we choose $\{\lam_i\}_{\idn}$ for the flocking model with the inter-particle bonding force as follows:
\begin{align}
\label{eqn:lam}
\lam_i = - \frac{\| v_i \|^2}{\| x_i \|^2} - \frac{\sigma}{N} \sum_{k=1}^N \frac{\langle x_k - x_i, x_i \rangle}{\langle x_i,x_i\rangle}.
\end{align}
Then if initial data satisfy \eqref{eqn:ini}, then we will show that all agents are located in the unit sphere for all time. We first show that $x_i$ is on the unit sphere and $v_i$ is in the tangent space of $\D$ at $x_i$ for all $\idn$.

\begin{proposition}
\label{prop:uni}
For $\{\lam_i\}_{\idn}$ given in \eqref{eqn:lam} and $t\in \Rpz$, the solution to \eqref{mainia}-\eqref{mainib} subject to \eqref{eqn:ini} satisfies that
\begin{align}
\label{eqn:1uni}
\langle v_i(t),x_i(t)\rangle=0, \quad \|x_i(t)\|= 1, \quad \mbox{for all}~ 1\leq i\leq N .
\end{align}
\end{proposition}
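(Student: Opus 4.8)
The plan is to track, for each $1\le i\le N$, the two scalar defect functions
\[
a_i(t):=\|x_i(t)\|^2-1, \qquad b_i(t):=\langle v_i(t),x_i(t)\rangle,
\]
and to show that they solve a homogeneous linear system of ODEs with vanishing initial data, so that $a_i\equiv b_i\equiv 0$. The compatibility condition \eqref{eqn:ini} gives $a_i(0)=b_i(0)=0$. Since $\|x_i(0)\|=1\neq 0$, continuity of the (locally unique) solution keeps $x_i(t)\neq 0$ on a maximal interval $[0,T^\ast)$, so the multipliers $\lam_i$ in \eqref{eqn:lam}, which divide by $\langle x_i,x_i\rangle$, are well defined there; it suffices to prove \eqref{eqn:1uni} on $[0,T^\ast)$ and then invoke the global existence from \cite{C-K-S} to conclude for all $t\ge 0$.

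Next I would differentiate. From \eqref{mainia} one gets at once $\dot a_i=2\langle v_i,x_i\rangle=2b_i$, while $\dot b_i=\langle \dot v_i,x_i\rangle+\|v_i\|^2$; substituting \eqref{mainib} produces a term $\lam_i\|x_i\|^2$, a communication sum, and a bonding sum. The only structural input needed is the identity
\[
\langle R_{x_k\rightarrow x_i}(v_k),\,x_i\rangle=\langle v_k,\,R_{x_k\rightarrow x_i}^{T}x_i\rangle=\langle v_k,\,R_{x_i\rightarrow x_k}(x_i)\rangle=\langle v_k,x_k\rangle=b_k,
\]
valid for every $v_k\in\R^3$ (no tangency needed), which uses only $R_{x_k\rightarrow x_i}^{T}=R_{x_i\rightarrow x_k}$ and $R_{x_i\rightarrow x_k}(x_i)=x_k$ from Lemma~\ref{lem:rot}, and holds trivially with $R=I$ when $x_k=x_i$. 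Plugging in the explicit multiplier, $\lam_i\|x_i\|^2=-\|v_i\|^2-\tfrac{\sigma}{N}\sum_k(\langle x_k,x_i\rangle-\|x_i\|^2)$ from \eqref{eqn:lam}, I expect the $\|v_i\|^2$ terms to cancel and the bonding sum to cancel exactly the $\sigma$-contribution coming from $\lam_i$, leaving the closed consensus-type system
\[
\dot a_i=2b_i,\qquad \dot b_i=\sum_{k=1}^N\frac{\psi_{ik}}{N}(b_k-b_i),\qquad 1\le i\le N,
\]
whose coefficients $\psi_{ik}=\psi(\|x_i-x_k\|)$ are continuous in $t$ by $(\mathcal{H}1)$ and $(\mathcal{H}3)$.

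Finally, the $b$-subsystem is linear and homogeneous with $b(0)=0$, so $b_i\equiv 0$ on $[0,T^\ast)$ by uniqueness for linear ODEs; equivalently, using the symmetry $\psi_{ik}=\psi_{ki}$, $\tfrac{d}{dt}\sum_i b_i^2=-\tfrac1N\sum_{i,k}\psi_{ik}(b_i-b_k)^2\le 0$ forces $\sum_i b_i^2\equiv 0$. Then $\dot a_i\equiv 0$ together with $a_i(0)=0$ yields $a_i\equiv 0$, i.e.\ \eqref{eqn:1uni} on $[0,T^\ast)$ and hence on $[0,\infty)$. The one point that requires care — and the only genuine obstacle — is that $R_{x_k\rightarrow x_i}$ is singular precisely at antipodal configurations $x_k=-x_i$; this is harmless in the $b_i$-equation because such a pair carries weight $\psi_{ik}=\psi(2)=0$ by $(\mathcal{H}2)$, so the offending term never actually appears. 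Everything else is the exact algebraic cancellation between the Lagrange multiplier and the bonding force, which is exactly how $\lam_i$ in \eqref{eqn:lam} was chosen.
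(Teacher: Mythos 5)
Your proof is correct, and it takes a genuinely different route from the paper's. The paper works with each agent separately: it computes $\tfrac{d}{dt}\langle v_i,x_i\rangle$, invokes $R_{x_k\rightarrow x_i}v_k\in T_{x_i}\D$ to kill the incoming communication terms, and obtains the decoupled scalar equation $\tfrac{d}{dt}\langle v_i,x_i\rangle=-\bigl(\sum_k\psi_{ik}/N\bigr)\langle v_i,x_i\rangle$, which it integrates explicitly. Note that the step $R_{x_k\rightarrow x_i}v_k\in T_{x_i}\D$ presupposes $v_k\in T_{x_k}\D$, i.e.\ the very identity being proved for index $k$, so the paper's computation is really a formal one on the constraint manifold. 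You instead use the weaker identity $\langle R_{x_k\rightarrow x_i}(v_k),x_i\rangle=\langle v_k,x_k\rangle$ (this is exactly \eqref{eqn:tan11}, valid for arbitrary $v_k$), which produces the \emph{coupled} consensus system $\dot b_i=\sum_k\tfrac{\psi_{ik}}{N}(b_k-b_i)$ and then concludes by homogeneity plus zero initial data (or by your dissipation identity). What your route buys is precisely the removal of that circularity, at the cost of having to argue for all agents simultaneously rather than one at a time; your observation that the antipodal singularity of $R$ is neutralized by $\psi(2)=0$ is also a point the paper leaves implicit. One small caveat: the identities of Lemma~\ref{lem:rot} are stated for points of $\D$, and from the raw formula \eqref{eqn:rot} one actually gets $R_{x_k\rightarrow x_i}^T x_i=\|x_i\|^2x_k$, so before $\|x_i\|=1$ is known your key identity reads $\langle R_{x_k\rightarrow x_i}(v_k),x_i\rangle=(1+a_i)b_k$. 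This only adds the term $\sum_k\tfrac{\psi_{ik}}{N}a_i b_k$ to the $b$-equation; since $a_i(t)$ is a known continuous function along the given trajectory, the joint $(a,b)$ system is still linear homogeneous with continuous coefficients and vanishing data, so your conclusion stands — but you should say this, rather than quoting the lemma as if the points were already on the sphere.
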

\begin{proof}
We claim that
\begin{align}
\label{eqn:uni11}
\frac{d}{dt}\langle v_i,x_i\rangle = 0.
\end{align}
From \eqref{mainia}-\eqref{mainib}, it follows that
\begin{align*}
\frac{d}{dt}\langle v_i,x_i\rangle &=\langle \dot{v}_i,x_i\rangle+ \|v_i\|^2\\& = \lam_i \| x_i \|^2+ \sum_{k=1}^N\frac{\psi_{ik}}{N}\langle R_{x_k \rightarrow x_i} v_k- v_i, x_i \rangle + \frac{\sigma}{N} \sum_{k=1}^N \langle x_k - x_i, x_i \rangle + \|v_i\|^2.
\end{align*}
As $R_{x_k \rightarrow x_i} v_k \in T_{x_i} \D$ and $\lam_i$ is given in \eqref{eqn:lam},
\begin{align*}
\frac{d}{dt}\langle v_i,x_i\rangle = - \bigg( \sum_{k=1}^N\frac{\psi_{ik}}{N} \bigg) \langle v_i, x_i \rangle,
\end{align*}
and for all $t \in \Rpz$,
\begin{align*}
\langle v_i(t),x_i(t)\rangle = \langle v_i(0),x_i(0)\rangle \exp \bigg(  \int_0^t \frac{1}{N}\sum_{k=1}^N\frac{\psi_{ik}(s)}{N} ds \bigg) = 0.
\end{align*}

On the other hand, by \eqref{eqn:uni11}, we have
\begin{align*}
\frac{d}{dt} \|x_i \|^2 = 2 \langle v_i,x_i\rangle = 0
\end{align*}
and thus we conclude \eqref{eqn:1uni}.
\end{proof}

Lastly, recall the following energy dissipation property. As mentioned before, this dissipation plays an important role in the proof of the flocking in \cite{C-K-S}. We also crucially use this property when we  prove the complete position flocking behavior.
\begin{proposition}
\label{prop:vt}\cite{C-K-S} Let $\xvi$ be the solution to \eqref{maino} with $(\mathcal{H}1)$-$(\mathcal{H}3)$.
Then, the following holds for all $t \in \Rpz$,
\begin{align}
\label{eqn:vt}
\frac{d\E}{dt} = - \sumij  \frac{\psi_{ij}}{N^2}\| R_{x_j\rightarrow x_i}(v_j)-v_i\|^2,
\end{align}
where $\E$ is the energy function of $\xvi$ defined in \eqref{eqn:e}.
As a consequence, we have
\begin{align}
\label{eqn:2vt}
\E(t) +  \sumij \int_0^t \frac{\psi_{ij}}{N^2} \| R_{x_j(s) \rightarrow x_i(s)}(v_j(s))-v_i(s)\|^2 ds \leq \E(0) \quad\hbox{ for all } t \in \Rpz.
\end{align}
\end{proposition}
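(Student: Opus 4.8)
The plan is to differentiate $\E=\E_K+\E_C$ directly along a solution of \eqref{maino}, using three ingredients: the constraints $\langle v_i,x_i\rangle=0$ and $\|x_i\|=1$ from Proposition~\ref{prop:uni}; the symmetry $\psi_{ij}=\psi_{ji}$ coming from $(\mathcal H1)$; and the orthogonality of the rotation operator — in particular $R_{x_k\to x_l}^{T}=R_{x_l\to x_k}$ and $\|R_{x_l\to x_k}(v)\|=\|v\|$ — from Lemma~\ref{lem:rot}. Note that the identity never requires differentiating $R$ itself; only the right-hand side of \eqref{mainob} enters, so once the derivatives of $\E_K$ and $\E_C$ are written out the argument is purely algebraic.

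First I would compute $\dfrac{d\E_K}{dt}=\dfrac{2}{N}\sum_{k=1}^N\langle v_k,\dot v_k\rangle$ and insert \eqref{mainob}. Since $\langle v_k,x_k\rangle=0$, the centripetal term $-\tfrac{\|v_k\|^2}{\|x_k\|^2}x_k$ contributes nothing and the bonding term reduces to $\tfrac{\sigma}{N}\sum_l\langle v_k,x_l\rangle$, leaving
\[
\frac{d\E_K}{dt}=\frac{2}{N^2}\sum_{k,l=1}^N\psi_{kl}\big(\langle v_k,R_{x_l\to x_k}(v_l)\rangle-\|v_k\|^2\big)+\frac{2\sigma}{N^2}\sum_{k,l=1}^N\langle v_k,x_l\rangle .
\]
Next I would differentiate $\E_C$ using $\dot x_k=v_k$, getting $\dfrac{d\E_C}{dt}=\dfrac{\sigma}{N^2}\sum_{k,l}\langle x_k-x_l,v_k-v_l\rangle$; expanding the bracket and using $\langle x_k,v_k\rangle=0$ together with the symmetry of the double sum under $k\leftrightarrow l$ collapses this to $-\dfrac{2\sigma}{N^2}\sum_{k,l}\langle v_k,x_l\rangle$. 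Hence the two $\sigma$-contributions cancel exactly and
\[
\frac{d\E}{dt}=\frac{2}{N^2}\sum_{k,l=1}^N\psi_{kl}\big(\langle v_k,R_{x_l\to x_k}(v_l)\rangle-\|v_k\|^2\big).
\]

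The remaining — and the key — step is to recast this as a negative sum of squares. Using $\psi_{kl}=\psi_{lk}$ I would symmetrize the double sum in $(k,l)$; then $R_{x_k\to x_l}^{T}=R_{x_l\to x_k}$ from \eqref{eqn:2rot} gives $\langle v_l,R_{x_k\to x_l}(v_k)\rangle=\langle v_k,R_{x_l\to x_k}(v_l)\rangle$, and the isometry $\|R_{x_l\to x_k}(v_l)\|=\|v_l\|$ from \eqref{eqn:3rot} lets me complete the square:
\[
2\langle v_k,R_{x_l\to x_k}(v_l)\rangle-\|v_k\|^2-\|v_l\|^2=-\big\|R_{x_l\to x_k}(v_l)-v_k\big\|^2 .
\]
Substituting and relabelling $(k,l)\to(i,j)$ yields \eqref{eqn:vt}. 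Finally \eqref{eqn:2vt} follows by integrating \eqref{eqn:vt} on $[0,t]$ and discarding the manifestly nonnegative time-integral, which leaves $\E(t)\le\E(0)$.

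The only point requiring care beyond algebra is regularity: the factor \eqref{R3} of $R_{x_j\to x_i}$ is discontinuous at antipodal configurations $x_i=-x_j$. This is harmless, since $\psi_{ij}=\psi(\|x_i-x_j\|)$ vanishes there ($\psi(2)=0$ by $(\mathcal H2)$) while $\|R_{x_j\to x_i}(v_j)-v_i\|\le\|v_i\|+\|v_j\|$ stays bounded by orthogonality, so each summand $\psi_{ij}\|R_{x_j\to x_i}(v_j)-v_i\|^2$ — and likewise the right-hand side of \eqref{mainob} — extends continuously; combined with the global-in-time $C^1$ solution from \cite{C-K-S}, this makes $t\mapsto\E(t)$ differentiable and legitimizes the computation. (As the statement is recalled from \cite{C-K-S}, one may alternatively simply cite it there.)
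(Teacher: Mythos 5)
Your proof is correct and follows essentially the same route as the paper's (the paper cites \cite{C-K-S} for this result, but its argument is the same): the $\sigma$-contributions from $\E_K$ and $\E_C$ cancel by swapping the summation indices, and the $\psi$-term is rewritten as a negative sum of squares by symmetrizing in $(i,j)$ and using $R_{x_i\rightarrow x_j}^{T}=R_{x_j\rightarrow x_i}$ together with the isometry $\|R_{x_j\rightarrow x_i}(v_j)\|=\|v_j\|$. Your added remark on the harmlessness of the antipodal singularity (since $\psi(2)=0$) is a sensible extra precaution not spelled out in the paper.
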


\section{Reduction to a linearized system of equations with a negative definite coefficient matrix}
\label{sec:3}
\setcounter{equation}{0}
In this section, we derive a linearized system of equations from the C-S type flocking model in \eqref{maino}. As mentioned before, the main obstacle in proving our main result comes from the lack of a conserved quantity. Compared to the original C-S model, the flocking model on sphere has no momentum conservation. Therefore, we cannot use the standard methodology using in the C-S model. On the other hand, the linearized system \eqref{eq:X} with a negative definite coefficient matrix gives new sharp estimates on the diameters of positions and velocities. This leads the complete position flocking result in Section~\ref{sec:4}. Additionally, we notice that our uniform estimates does not depend on the number of agents $N$. In order to obtain a uniform analysis regardless of $N$, we need a global upper bound of physical quantities below, not the upper bound of their average as in \cite{C-K-S}.

For given $i,j\in \{1,\ldots,N\}$ and $\xvi$, consider the vector-valued functional $X^{ij}(t)$ given by
\begin{align}\label{def:X}X^{ij}(t):=(X^{ij}_1(t),X^{ij}_2(t),X^{ij}_3(t))^T,\end{align} where
\begin{align}\label{def:X12}X^{ij}_1(t):=\|x_i(t)-x_j(t)\|^2,\qquad X^{ij}_2(t):= \langle v_i(t)-v_j(t),x_i(t)-x_j(t)\rangle,\end{align}
and
\begin{align}\label{def:X3}  X^{ij}_3(t):=\| v_i(t)-v_j(t)\|^2.\end{align}

In Proposition~\ref{prop 4.2}, we prove that $X^{ij}$ satisfies the system of linear differential equations in \eqref{eq:X}, which has the following inhomogeneous terms,
\begin{align}
\label{def:F}
F^{ij}(t):=(F^{ij}_1(t),F^{ij}_2(t),F^{ij}_3(t))^T,
\end{align}
where $F^{ij}_1$, $F^{ij}_2$, and  $F^{ij}_3$ are defined by
 \begin{align}\begin{aligned}
 \label{def:F12}
  F^{ij}_1(t):=&0,\\ F^{ij}_2(t):=&
-\frac{\|v_i\|^2+\|v_j\|^2}{2}\|x_i-x_j\|^2+\frac{\psi_0}{N}\sum_{k=1}^N
\langle R_{x_k \rightarrow x_i} (v_k)
-R_{x_k \rightarrow x_j} (v_k)
,x_i-x_j\rangle
\\
&+
\sum_{k=1}^N\left (\frac{\psi_{ik}}{N}-\frac{\psi_{ii}}{N}\right)\langle R_{x_k \rightarrow x_i} (v_k)- v_i
,x_i-x_j\rangle\\&
-
\sum_{k=1}^N\left (\frac{\psi_{jk}}{N}-\frac{\psi_{jj}}{N}\right)\langle R_{x_k \rightarrow x_j} (v_k)- v_j
,x_i-x_j\rangle
\\&
 +\frac{\sigma}{4N}\sum_{k=1}^N\|x_k-x_i\|^2 \|x_i-x_j\|^2+\frac{\sigma}{4N}\sum_{k=1}^N\|x_k-x_j\|^2 \|x_i-x_j\|^2
 ,
 \end{aligned}\end{align}
and
 \begin{align} \begin{aligned}\label{def:F3}
 F^{ij}_3(t):=&2\langle -\|v_i\|^2x_i
 +\|v_j\|^2x_j
 ,v_i-v_j\rangle
\\
&
+
2\sum_{k=1}^N\frac{\psi_0}{N} \left\langle R_{x_k \rightarrow x_i} (v_k)
-R_{x_k \rightarrow x_j} (v_k)
,v_i-v_j\right\rangle\\&
 +2\sum_{k=1}^N\left(\frac{\psi_{ik}}{N}-\frac{\psi_{ii}}{N}\right)\left\langle R_{x_k \rightarrow x_i} (v_k)- v_i
,v_i-v_j\right\rangle\\
&
-2\sum_{k=1}^N\left(\frac{\psi_{jk}}{N}-\frac{\psi_{jj}}{N}\right)\left\langle R_{x_k \rightarrow x_j} (v_k)- v_j
,v_i-v_j\right\rangle
\\
&
+
\frac{2\sigma}{N}\sum_{k=1}^N  ( \langle x_i,x_k \rangle-1) \langle  x_i,v_j\rangle
+\frac{2\sigma}{N}\sum_{k=1}^N
 (\langle x_j,x_k \rangle-1) \langle  x_j
,v_i\rangle. \end{aligned}
 \end{align}

\begin{proposition}\label{prop 4.2}
Let $\xvi$ be the solution to \eqref{maino}. For any $i,j\in \{1,\ldots,N\}$, the vector-valued functional $X^{ij}$ defined in \eqref{def:X}-\eqref{def:X3} satisfies \phantom{\eqref{def:F12}\eqref{def:X12}}
 \begin{align}\label{eq:X}
\frac{d}{dt}X^{ij}(t)=A X^{ij}+F^{ij},
 \end{align}
where $F^{ij}$ is the functional defined in \eqref{def:F}-\eqref{def:F3} and for  positive constants $\sigma $ and $\psi_0:=\psi(0)$, the coefficient matrix $A$ is given by
 \begin{align*}
A =
\left(\begin{matrix}
0 & 2 &0  \\
-\sigma & -\psi_0  &1 \\
0 & -2\sigma & -2\psi_0
\end{matrix}\right).
\end{align*}

\end{proposition}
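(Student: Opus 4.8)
The plan is to differentiate each scalar component $X^{ij}_1,X^{ij}_2,X^{ij}_3$ of $X^{ij}$ in time, substitute the equations of motion \eqref{mainoa}--\eqref{mainob}, and then split each resulting expression into a part that is linear in the triple $(X^{ij}_1,X^{ij}_2,X^{ij}_3)$, which will account for the matrix $A$, and a remainder, which must coincide with the corresponding component of $F^{ij}$. The first component is immediate: since $\dot x_i=v_i$, we get $\tfrac{d}{dt}X^{ij}_1=2\langle v_i-v_j,x_i-x_j\rangle=2X^{ij}_2$, which gives the first row $(0,2,0)$ of $A$ and $F^{ij}_1=0$. The two ingredients I would keep at hand throughout are, first, the consequences of Proposition~\ref{prop:uni}, namely $\|x_i\|=1$ and $\langle v_i,x_i\rangle=0$, and second, the unit-sphere identity $1-\langle x_i,x_j\rangle=\tfrac12\|x_i-x_j\|^2$ together with $\|x_i-x_k\|^2=2(1-\langle x_i,x_k\rangle)$.

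For $X^{ij}_2$, differentiation gives $\tfrac{d}{dt}X^{ij}_2=\langle \dot v_i-\dot v_j,x_i-x_j\rangle+X^{ij}_3$, and I substitute \eqref{mainob} into the first summand. Using $\|x_i\|=1$, the centripetal term reduces to $-\|v_i\|^2 x_i$, and pairing $-\|v_i\|^2 x_i+\|v_j\|^2 x_j$ with $x_i-x_j$ and applying the sphere identity produces the first line of $F^{ij}_2$ (with no linear contribution, consistent with the first column of $A$ being $(0,-\sigma,0)^T$ apart from the bonding part). For the communication term I use the decomposition $\psi_{ik}=\psi_{ii}+(\psi_{ik}-\psi_{ii})$ with $\psi_{ii}=\psi(0)=\psi_0$: the $\psi_0$-part, after taking the $i$--$j$ difference, contributes $\tfrac{\psi_0}{N}\sum_k\langle R_{x_k\to x_i}(v_k)-R_{x_k\to x_j}(v_k),x_i-x_j\rangle-\psi_0 X^{ij}_2$, where $-\psi_0 X^{ij}_2$ is exactly the $(2,2)$-entry of $A$ and the sum is the second line of $F^{ij}_2$, while the $(\psi_{ik}-\psi_{ii})$-parts give the third and fourth lines of $F^{ij}_2$ verbatim. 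For the bonding term I use the elementary identity $x_k-\langle x_i,x_k\rangle x_i=-(x_i-x_k)+(1-\langle x_i,x_k\rangle)x_i$; summing over $k$ (the $\tfrac1N\sum_k x_k$ terms cancel in the $i$--$j$ difference), replacing $1-\langle x_i,x_k\rangle$ by $\tfrac12\|x_i-x_k\|^2$, and pairing with $x_i-x_j$ while using $\langle x_i,x_i-x_j\rangle=\tfrac12\|x_i-x_j\|^2$ isolates $-\sigma\|x_i-x_j\|^2=-\sigma X^{ij}_1$, the $(2,1)$-entry of $A$, and leaves the last line of $F^{ij}_2$. Collecting the linear pieces yields the second row $(-\sigma,-\psi_0,1)$ of $A$.

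The computation for $X^{ij}_3$ is structurally the same, starting from $\tfrac{d}{dt}X^{ij}_3=2\langle \dot v_i-\dot v_j,v_i-v_j\rangle$. The centripetal term contributes the first line of $F^{ij}_3$ directly, with no linear part, consistent with the $(3,1)$-entry of $A$ being $0$. The communication term, with the same splitting $\psi_{ik}=\psi_{ii}+(\psi_{ik}-\psi_{ii})$, produces the linear contribution $-2\psi_0 X^{ij}_3$ together with the second, third and fourth lines of $F^{ij}_3$. For the bonding term I use $-\langle x_i,x_k\rangle x_i+\langle x_j,x_k\rangle x_j=-(x_i-x_j)+\tfrac12\|x_i-x_k\|^2 x_i-\tfrac12\|x_j-x_k\|^2 x_j$, which after summing over $k$ and pairing with $v_i-v_j$ gives $-2\sigma\langle x_i-x_j,v_i-v_j\rangle=-2\sigma X^{ij}_2$, the $(3,2)$-entry of $A$, plus the two remainder terms $\tfrac{\sigma}{N}\sum_k\|x_i-x_k\|^2\langle x_i,v_i-v_j\rangle$ and $-\tfrac{\sigma}{N}\sum_k\|x_j-x_k\|^2\langle x_j,v_i-v_j\rangle$; using $\langle x_i,v_i\rangle=\langle x_j,v_j\rangle=0$ and $\|x_i-x_k\|^2=2(1-\langle x_i,x_k\rangle)$ these become precisely the last two lines of $F^{ij}_3$. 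This gives the third row $(0,-2\sigma,-2\psi_0)$ of $A$ and establishes \eqref{eq:X}.

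I expect the main difficulty to be organizational rather than conceptual. The communication terms do not telescope, since $R_{x_k\to x_i}\ne R_{x_k\to x_j}$, so one has to be disciplined about which pieces are retained as genuinely linear in $X^{ij}$ (those carrying the "base" rate $\psi_0$, or carrying $\sigma$ after the sphere identities have been used) and which are placed into $F^{ij}$; and the two bonding-force identities, while elementary, must be applied in exactly the form above so that the coefficients $-\sigma$, $-2\sigma$ and $-\psi_0$, $-2\psi_0$ emerge cleanly. Everything else is bookkeeping with $\|x_i\|=1$, $1-\langle x_i,x_j\rangle=\tfrac12\|x_i-x_j\|^2$, and the tangency relation $\langle v_i,x_i\rangle=0$.
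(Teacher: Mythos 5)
Your proposal is correct and follows essentially the same route as the paper's proof: componentwise differentiation, substitution of \eqref{mainob}, the splitting $\psi_{ik}=\psi_0+(\psi_{ik}-\psi_{ii})$ to extract the $-\psi_0$ and $-2\psi_0$ entries, and the unit-sphere identities $\|x_i\|=1$, $\langle v_i,x_i\rangle=0$, $1-\langle x_i,x_k\rangle=\tfrac12\|x_i-x_k\|^2$ to isolate $-\sigma X^{ij}_1$ and $-2\sigma X^{ij}_2$ from the bonding term. The intermediate identities you use for the bonding force are written slightly differently from the paper's (which expands the inner products directly and then adds and subtracts the constant $2$), but they are algebraically equivalent and yield the same remainders $F^{ij}_2$, $F^{ij}_3$.
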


\begin{remark}
We will verify that the leading coefficients of the linearized system is the sum of a negative definite matrix and controllable quantities by the energy in \eqref{eqn:e}.
\end{remark}

\begin{proof}[Proof of Proposition~\ref{prop 4.2}]
  From direct calculation, it follows that
\begin{align}\label{pf:x1}
\frac{d}{dt}X^{ij}_1=\frac{d}{dt} \|x_i-x_j\|^2= 2\langle v_i-v_j,x_i-x_j\rangle=2X^{ij}_2,
\end{align}
and
\begin{align*}
\frac{d}{dt}X^{ij}_2&=\frac{d}{dt}  \langle v_i-v_j,x_i-x_j\rangle
=\langle v_i-v_j,v_i-v_j\rangle+\langle \dot v_i-\dot v_j,x_i-x_j\rangle
=X^{ij}_3+\langle \dot v_i-\dot v_j,x_i-x_j\rangle.
\end{align*}

By \eqref{mainob}, we obtain that
\begin{align*}
\langle \dot v_i-\dot v_j,x_i-x_j\rangle&=
\langle -\|v_i\|^2x_i
 +\|v_j\|^2x_j
 ,x_i-x_j\rangle
\\&\quad  +\left\langle\sum_{k=1}^N\frac{\psi_{ik}}{N}(R_{x_k \rightarrow x_i} (v_k)- v_i)
-\sum_{k=1}^N\frac{\psi_{jk}}{N}(R_{x_k \rightarrow x_j} (v_k)- v_j)
,x_i-x_j\right\rangle
\\
&\quad+\left\langle \frac{\sigma}{N} \sum_{k=1}^N (x_k - \langle x_i,x_k \rangle  x_i)- \frac{\sigma}{N} \sum_{k=1}^N (x_k - \langle x_j,x_k \rangle  x_j)
,x_i-x_j\right\rangle
\\&:=K^{ij}_1+K^{ij}_2+K^{ij}_3.
\end{align*}

For $K^{ij}_1$, we use the conservation property of $\|x_i\|$.
\begin{align*}
K^{ij}_1=
 -\|v_i\|^2+ \|v_i\|^2\langle x_i,x_j\rangle
  -\|v_j\|^2+ \|v_j\|^2\langle x_i,x_j\rangle=-\frac{\|v_i\|^2+\|v_j\|^2}{2}\|x_i-x_j\|^2
\end{align*}
Note that  $\psi_0=\psi_{ii}$ for all $i\in \{1,\ldots,N\}$.
For $K^{ij}_2$, we have
\begin{align*}
 K^{ij}_2&=\left\langle\sum_{k=1}^N\frac{\psi_{ik}}{N}(R_{x_k \rightarrow x_i} (v_k)- v_i)
-\sum_{k=1}^N\frac{\psi_{jk}}{N}(R_{x_k \rightarrow x_j} (v_k)- v_j)
,x_i-x_j\right\rangle\\
&= \left\langle\sum_{k=1}^N\frac{\psi_0}{N} (R_{x_k \rightarrow x_i} (v_k)- v_i)
-\sum_{k=1}^N\frac{\psi_0}{N}(R_{x_k \rightarrow x_j} (v_k)- v_j)
,x_i-x_j\right\rangle\\&
\quad
+
\left \langle\sum_{k=1}^N\left(\frac{\psi_{ik}}{N}-\frac{\psi_{ii}}{N}\right)(R_{x_k \rightarrow x_i} (v_k)- v_i)
,x_i-x_j\right\rangle
\\&\quad-
\left \langle\sum_{k=1}^N\left(\frac{\psi_{jk}}{N}-\frac{\psi_{jj}}{N}\right)(R_{x_k \rightarrow x_j} (v_k)- v_j)
,x_i-x_j\right\rangle\\
&=
-\psi_0\langle v_i
- v_j
,x_i-x_j\rangle
+\frac{\psi_0}{N}\sum_{k=1}^N
\langle R_{x_k \rightarrow x_i} (v_k)
-R_{x_k \rightarrow x_j} (v_k)
,x_i-x_j\rangle
\\&
\quad
 +
\sum_{k=1}^N\left (\frac{\psi_{ik}}{N}-\frac{\psi_{ii}}{N}\right)\langle R_{x_k \rightarrow x_i} (v_k)- v_i
,x_i-x_j\rangle\\&
\quad
-
\sum_{k=1}^N\left (\frac{\psi_{jk}}{N}-\frac{\psi_{jj}}{N}\right)\langle R_{x_k \rightarrow x_j} (v_k)- v_j
,x_i-x_j\rangle
.\end{align*}
Therefore, we have
\begin{align*}
 K^{ij}_2&=
-\psi_0 X^{ij}_2
 +\frac{\psi_0}{N}\sum_{k=1}^N
\langle R_{x_k \rightarrow x_i} (v_k)
-R_{x_k \rightarrow x_j} (v_k)
,x_i-x_j\rangle
\\&
\quad
 +
\sum_{k=1}^N\left (\frac{\psi_{ik}}{N}-\frac{\psi_{ii}}{N}\right)\langle R_{x_k \rightarrow x_i} (v_k)- v_i
,x_i-x_j\rangle\\&
\quad
-
\sum_{k=1}^N\left (\frac{\psi_{jk}}{N}-\frac{\psi_{jj}}{N}\right)\langle R_{x_k \rightarrow x_j} (v_k)- v_j
,x_i-x_j\rangle.
\end{align*}

For $K_3$, we use direct calculation to obtain
\begin{align*}
  K^{ij}_3&=
 \frac{\sigma}{N}\sum_{k=1}^N\langle  - \langle x_i,x_k \rangle  x_i+\langle x_j,x_k \rangle  x_j,x_i-x_j\rangle
\\&= \frac{\sigma}{N}\sum_{k=1}^N\Big( - \langle x_i,x_k \rangle + \langle x_i,x_k \rangle\langle x_i,x_j \rangle+ \langle x_j,x_k \rangle  \langle     x_j,x_i\rangle-\langle x_j,x_k \rangle \Big)
\\&=-\frac{\sigma}{N}\sum_{k=1}^N\frac{\langle x_i,x_k \rangle+\langle x_j,x_k \rangle }{2} \| x_i-x_j\|^2
\\&=-\sigma \| x_i-x_j\|^2-\frac{\sigma}{N}\sum_{k=1}^N\frac{\langle x_i,x_k \rangle+\langle x_j,x_k \rangle-2 }{2} \| x_i-x_j\|^2
. \end{align*}

This implies that
\begin{align*}
  K^{ij}_3=-\sigma X^{ij}_1
 +\frac{\sigma}{4N}\sum_{k=1}^N\|x_k-x_i\|^2 \|x_i-x_j\|^2+\frac{\sigma}{4N}\sum_{k=1}^N\|x_k-x_j\|^2 \|x_i-x_j\|^2.
 \end{align*}
Therefore,
\begin{align*}
\frac{d}{dt}X^{ij}_2&=X^{ij}_3+K^{ij}_1+K^{ij}_2+K^{ij}_3\\
&=X^{ij}_3
-\frac{\|v_i\|^2+\|v_j\|^2}{2}\|x_i-x_j\|^2\\
&\quad
-\psi_0 X^{ij}_2
 +\frac{\psi_0}{N}\sum_{k=1}^N
\langle R_{x_k \rightarrow x_i} (v_k)
-R_{x_k \rightarrow x_j} (v_k)
,x_i-x_j\rangle
\\&
\quad
 +
\sum_{k=1}^N\left (\frac{\psi_{ik}}{N}-\frac{\psi_{ii}}{N}\right)\langle R_{x_k \rightarrow x_i} (v_k)- v_i
,x_i-x_j\rangle\\&
\quad
-
\sum_{k=1}^N\left (\frac{\psi_{jk}}{N}-\frac{\psi_{jj}}{N}\right)\langle R_{x_k \rightarrow x_j} (v_k)- v_j
,x_i-x_j\rangle
\\&
\quad-\sigma X^{ij}_1
 +\frac{\sigma}{4N}\sum_{k=1}^N\|x_k-x_i\|^2 \|x_i-x_j\|^2+\frac{\sigma}{4N}\sum_{k=1}^N\|x_k-x_j\|^2 \|x_i-x_j\|^2.
\end{align*}
Thus, we obtain the following differential equation for $X_2$.
\begin{align}\label{pf:x2}
\frac{d}{dt}X^{ij}_2=
X^{ij}_3
-\psi_0 X^{ij}_2
-\sigma X^{ij}_1+F^{ij}_2.
\end{align}

We next consider $X^{ij}_3$ case. By the definition of $ X^{ij}_3$,
\begin{align*}
\frac{1}{2}\frac{d}{dt}  X^{ij}_3=\langle \dot v_i-\dot v_j,v_i-v_j\rangle.
\end{align*}
By \eqref{mainob}, we have
\begin{align*}
\langle \dot v_i-\dot v_j,v_i-v_j\rangle&=
\langle -\|v_i\|^2x_i
 +\|v_j\|^2x_j
 ,v_i-v_j\rangle
\\&\quad  +\left\langle\sum_{k=1}^N\frac{\psi_{ik}}{N}(R_{x_k \rightarrow x_i} (v_k)- v_i)
-\sum_{k=1}^N\frac{\psi_{jk}}{N}(R_{x_k \rightarrow x_j} (v_k)- v_j)
,v_i-v_j\right\rangle
\\
&\quad+\left\langle \frac{\sigma}{N} \sum_{k=1}^N (x_k - \langle x_i,x_k \rangle  x_i)- \sigma \sum_{k=1}^N (x_k - \langle x_j,x_k \rangle  x_j)
,v_i-v_j\right\rangle\\
&:=L^{ij}_1+L^{ij}_2+L^{ij}_3.
\end{align*}
Similar to $X^{ij}_2$ case, we consider $L^{ij}_1$, $L^{ij}_2$, and $L^{ij}_3$ separately and use
\[\psi_0=\psi_{ii}\quad \mbox{ for all}\quad  i\in \{1,\ldots,N\}.\]

For $L^{ij}_2$,
\begin{align*}
L^{ij}_2&=
-\psi_0\langle v_i-
 v_j,v_i-v_j\rangle
+
\left\langle\sum_{k=1}^N\frac{\psi_0}{N}(R_{x_k \rightarrow x_i} (v_k)
-R_{x_k \rightarrow x_j} (v_k))
,v_i-v_j\right\rangle\\&
\quad +\left\langle\sum_{k=1}^N\left(\frac{\psi_{ik}}{N}-\frac{\psi_{ii}}{N}\right)(R_{x_k \rightarrow x_i} (v_k)- v_i)
-\sum_{k=1}^N\left(\frac{\psi_{jk}}{N}-\frac{\psi_{jj}}{N}\right)(R_{x_k \rightarrow x_j} (v_k)- v_j)
,v_i-v_j\right\rangle.
\end{align*}
Therefore, we have
\begin{align*}
L^{ij}_2&=
-\psi_0 X^{ij}_3
+
\sum_{k=1}^N\frac{\psi_0}{N} \left\langle R_{x_k \rightarrow x_i} (v_k)
-R_{x_k \rightarrow x_j} (v_k)
,v_i-v_j\right\rangle\\&
\quad +\sum_{k=1}^N\left(\frac{\psi_{ik}}{N}-\frac{\psi_{ii}}{N}\right)\left\langle R_{x_k \rightarrow x_i} (v_k)- v_i
,v_i-v_j\right\rangle
\\&\quad
-\sum_{k=1}^N\left(\frac{\psi_{jk}}{N}-\frac{\psi_{jj}}{N}\right)\left\langle R_{x_k \rightarrow x_j} (v_k)- v_j
,v_i-v_j\right\rangle.
\end{align*}

For $L^{ij}_3$, we have
\begin{align*}
  L^{ij}_3&=\frac{\sigma}{N}\sum_{k=1}^N\langle    - \langle x_i,x_k \rangle  x_i+ \langle x_j,x_k \rangle  x_j
,v_i-v_j\rangle\\
&=\frac{\sigma}{N}\sum_{k=1}^N  ( \langle x_i,x_k \rangle \langle  x_i,v_j\rangle + \langle x_j,x_k \rangle \langle  x_j
,v_i\rangle)
\\&=
-\sigma  \langle  x_i-x_j,v_i-v_j\rangle
+
\frac{\sigma}{N}\sum_{k=1}^N  \Big(( \langle x_i,x_k \rangle-1) \langle  x_i,v_j\rangle + (\langle x_j,x_k\rangle -1) \langle  x_j
,v_i\rangle\Big)
\\&=
-\sigma X^{ij}_2
+
\frac{\sigma}{N}\sum_{k=1}^N  \Big(( \langle x_i,x_k \rangle-1) \langle  x_i,v_j\rangle + (\langle x_j,x_k \rangle-1) \langle  x_j
,v_i\rangle\Big)
\end{align*}

Thus, we have
\begin{align*}
\frac{1}{2}\frac{d}{dt}  X^{ij}_3&=L^{ij}_1+L^{ij}_2+L^{ij}_3\\
&=
\langle -\|v_i\|^2x_i
 +\|v_j\|^2x_j
 ,v_i-v_j\rangle
-\psi_0 X^{ij}_3
+
\sum_{k=1}^N\frac{\psi_0}{N} \left\langle R_{x_k \rightarrow x_i} (v_k)
-R_{x_k \rightarrow x_j} (v_k)
,v_i-v_j\right\rangle\\&
\quad +\sum_{k=1}^N\left(\frac{\psi_{ik}}{N}-\frac{\psi_{ii}}{N}\right)\left\langle R_{x_k \rightarrow x_i} (v_k)- v_i
,v_i-v_j\right\rangle
\\&\quad
-\sum_{k=1}^N\left(\frac{\psi_{jk}}{N}-\frac{\psi_{jj}}{N}\right)\left\langle R_{x_k \rightarrow x_j} (v_k)- v_j
,v_i-v_j\right\rangle
\\
&\quad
-\sigma X^{ij}_2
+
\frac{\sigma}{N}\sum_{k=1}^N  \Big(( \langle x_i,x_k \rangle-1) \langle  x_i,v_j\rangle + (\langle x_j,x_k \rangle-1) \langle  x_j
,v_i\rangle\Big),
\end{align*}
i.e.,
\begin{align}\label{pf:x3}
\frac{d}{dt}  X^{ij}_3&=-2\psi_0 X^{ij}_3-2\sigma X^{ij}_2+F^{ij}_3.
\end{align}

By \eqref{pf:x1}, \eqref{pf:x2}, and \eqref{pf:x3}, we obtain  the differential equation in \eqref{eq:X}.
\end{proof}

\section{Uniform estimates for positions and velocities: the proof of the main theorem}
\label{sec:4}
\setcounter{equation}{0}



In this section, we complete the proof of our main theorem: the complete position alignment of the solution to \eqref{maino} when the differences of agents' initial positions and velocities and the initial maximal velocity of all agents are sufficiently small. For simplicity, we define the following Lyapunov functionals.
\[\mathcal{D}_{x}(t)=\max_{1\leq i,j\leq N}\|x_i(t)-x_j(t)\|,\qquad \mathcal{D}_{v}(t)=\max_{1\leq i,j\leq N}\|v_i(t)-v_j(t)\|.\]
 Based on the linearized system derived in Section 3, we obtain exponential decay rates for position and velocity diameters $\mathcal{D}_x(t)$, $\mathcal{D}_v(t)$ via estimating the inhomogeneous term $F^{ij}$ defined in \eqref{def:F}. The inhomogeneous term $F^{ij}$ is bounded by $c_l X^{ij}+c_h \|X^{ij}\|X^{ij}$. The higher order term  $c_h \|X^{ij}\|X^{ij}$ will be controlled by small initial data assumption and the coefficient $c_l$ of the lower order term is bounded by $\|\psi\|\mathcal{V}$, where $\mathcal{V}(t)$ is the maximal velocity defined by
\[\mathcal{V}(t)=\max_{1\leq k\leq N}\|v_k(t)\|.\]
As mentioned before, the energy functional $\E$ is decreasing. This dissipation property in Proposition \ref{prop:vt} leads to a uniform boundedness of the maximum velocity $\mathcal{V}$. Since the coefficient matrix $A$ in Proposition \ref{prop 4.2} is negative definite, combining the above properties, we can obtain the complete position flocking result.

\begin{lemma}\label{lemma 4.1}
Let $\xvi$ be the solution to \eqref{maino}. Assume that there is a constant $\psi_m>0$ such that for any $i,j \in \{1,\ldots,N\}$, $\psi_{ij}(s)\geq \psi_m$ on $0\leq s\leq t$. Then
\begin{align*}
\mathcal{V}^2(t)&\leq e^{-\frac{\psi_m}{2}t}\mathcal{V}^2(0)+(1-e^{-\frac{\psi_m}{2}t})
\bigg(  2\sup_{0\leq s\leq t }\E_K(s)+ \frac{4\sigma^2}{\psi_m^2} \sup_{0\leq s\leq t }\mathcal{D}_{x}^2(s)\bigg).
\end{align*}
\end{lemma}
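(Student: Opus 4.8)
The plan is to reduce the lemma to a single scalar Gr\"onwall inequality for $\mathcal{V}^2(t)=\max_{1\le k\le N}\|v_k(t)\|^2$. First I would fix an index $i$ and differentiate $\|v_i\|^2$. By Proposition~\ref{prop:uni} we have $\|x_i\|=1$ and $\langle v_i,x_i\rangle=0$, so pairing $\dot v_i$ in \eqref{mainob} with $v_i$ annihilates the centripetal term $-\|v_i\|^2x_i$ and the $-\langle x_i,x_k\rangle x_i$ part of the bonding force, leaving
\[
\tfrac{d}{dt}\|v_i\|^2 = \tfrac{2}{N}\sum_{k=1}^N\psi_{ik}\,\langle \rki(v_k)-v_i,\,v_i\rangle + \tfrac{2\sigma}{N}\sum_{k=1}^N\langle x_k-x_i,\,v_i\rangle .
\]
Then I would estimate the two sums. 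Since $\rki$ is an isometry (Lemma~\ref{lem:rot}, Proposition~\ref{prop:tan}), $2\langle \rki(v_k),v_i\rangle\le\|v_k\|^2+\|v_i\|^2$, so the first sum is at most $\tfrac1N\sum_k\psi_{ik}(\|v_k\|^2-\|v_i\|^2)$; and by Cauchy--Schwarz with $\|x_k-x_i\|\le\mathcal{D}_x$ the second sum is at most $2\sigma\mathcal{D}_x\|v_i\|$. Hence, for every $i$,
\[
\tfrac{d}{dt}\|v_i\|^2 \le \tfrac1N\sum_{k=1}^N\psi_{ik}\big(\|v_k\|^2-\|v_i\|^2\big) + 2\sigma\mathcal{D}_x\|v_i\| .
\]

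Next I would pass to the maximum. The function $\mathcal{V}^2$ is the maximum of finitely many $C^1$ functions, hence locally Lipschitz and differentiable a.e.; at a.e.\ $t$ its derivative equals $\tfrac{d}{dt}\|v_{i^\ast}\|^2$ for an index $i^\ast=i^\ast(t)$ realizing the maximum (the standard envelope remark: the nonnegative function $\mathcal{V}^2-\|v_{i^\ast}\|^2$ vanishes and is differentiable at $t$, so it has zero derivative there), and at such $t$ one has $\|v_k\|^2\le\mathcal{V}^2=\|v_{i^\ast}\|^2$ for all $k$. The crucial point is then to exploit the lower bound $\psi_{i^\ast k}\ge\psi_m$ through the decomposition $\psi_{i^\ast k}=\psi_m+(\psi_{i^\ast k}-\psi_m)$ with nonnegative remainder: since every $\|v_k\|^2-\mathcal{V}^2\le0$,
\[
\tfrac1N\sum_{k=1}^N\psi_{i^\ast k}\big(\|v_k\|^2-\mathcal{V}^2\big)\le\tfrac{\psi_m}{N}\sum_{k=1}^N\big(\|v_k\|^2-\mathcal{V}^2\big)=\psi_m\big(\E_K-\mathcal{V}^2\big).
\]
Combining this with Young's inequality $2\sigma\mathcal{D}_x\mathcal{V}\le\tfrac{\psi_m}{2}\mathcal{V}^2+\tfrac{2\sigma^2}{\psi_m}\mathcal{D}_x^2$ gives the differential inequality
\[
\tfrac{d}{dt}\mathcal{V}^2 \le -\tfrac{\psi_m}{2}\mathcal{V}^2 + \psi_m\E_K + \tfrac{2\sigma^2}{\psi_m}\mathcal{D}_x^2 \qquad\text{for a.e. }t\in[0,t].
\]

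Finally I would integrate this by Gr\"onwall's lemma, bounding $\E_K(s)$ and $\mathcal{D}_x^2(s)$ by their suprema over $[0,t]$ and pulling them out of $\int_0^t e^{-\frac{\psi_m}{2}(t-s)}\,ds=\tfrac{2}{\psi_m}(1-e^{-\frac{\psi_m}{2}t})$; the products $\tfrac{2}{\psi_m}\cdot\psi_m=2$ and $\tfrac{2}{\psi_m}\cdot\tfrac{2\sigma^2}{\psi_m}=\tfrac{4\sigma^2}{\psi_m^2}$ are exactly the constants in the statement. I expect the main subtlety to be the splitting step: if one instead bounds the coupling term using $\psi_0=\max\psi$ one only gets $\psi_0\E_K$, and hence a coefficient $\psi_0/\psi_m$ in place of the clean $2$, so it is essential to use the active lower bound $\psi_m$; the non-differentiability of $\mathcal{V}^2$, by contrast, is dealt with routinely by the envelope remark above.
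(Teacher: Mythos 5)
Your proposal is correct and follows essentially the same route as the paper's proof: evaluate $\frac{d}{dt}\|v_i\|^2$ at a maximizing index, use the isometry of $R_{x_k\to x_i}$ together with the sign of $\|v_k\|^2-\mathcal{V}^2$ to replace $\psi_{i^\ast k}$ by $\psi_m$ and obtain $\psi_m(\E_K-\mathcal{V}^2)$, absorb the bonding term by Young's inequality with weight $\psi_m/2$, and integrate by Gr\"onwall. The only (harmless) differences are cosmetic: you justify the a.e.\ differentiation of the maximum explicitly via the envelope remark, whereas the paper applies Cauchy--Schwarz before Young; the constants come out identically.
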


\begin{remark}
Without the bonding force $\sigma=0$, it is not hard to verify that the maximal velocity $\mathcal{V}$ decreases in time. However, this property is not expected in our model due to the bonding force term $\sigma>0$. Instead, we use the modulus preservation property of the rotation operator $R$ as in Proposition~\ref{prop:tan} to get the uniform estimate for the maximal velocity.
\end{remark}

\begin{proof}[Proof of Lemma~\ref{lemma 4.1}]
For a fixed $t>0$, we can take an index $i_t$ such that
\[\|v_{i_t}(t)\|=\max_{1\leq k\leq N }\|v_k(t)\|.\]
Then, by \eqref{mainib}, we have
\begin{align*}
\frac{d}{dt}\|v_{i_t}\|^2&=2\sum_{k=1}^N\frac{\psi_{i_t k}}{N}(\langle R_{x_k \rightarrow x_{i_t}} v_k,v_{i_t}\rangle - \|v_{i_t}\|^2) + \frac{2\sigma}{N} \sum_{k=1}^N \langle x_k ,v_{i_t}\rangle \\
&=2\sum_{k=1}^N\frac{\psi_{i_t k}}{N}(\langle R_{x_k \rightarrow x_{i_t}} v_k,v_{i_t}\rangle - \|v_{i_t}\|^2) + \frac{2\sigma}{N} \sum_{k=1}^N \langle x_k-x_{i_t} ,v_{i_t}\rangle
\\
&\leq 2\sum_{k=1}^N\frac{\psi_{i_t k}}{N}(\| R_{x_k \rightarrow x_{i_t}} v_k\|\|v_{i_t}\| - \|v_{i_t}\|^2) + \frac{2\sigma}{N} \sum_{k=1}^N \langle x_k-x_{i_t} ,v_{i_t}\rangle
.
\end{align*}
We use the modulus conservation property in Proposition \ref{prop:tan} to obtain
\begin{align*}
\frac{d}{dt}\|v_{i_t}\|^2
&\leq 2\sum_{k=1}^N\frac{\psi_{i_t k}}{N}(\|  v_k\|\|v_{i_t}\| - \|v_{i_t}\|^2) + \frac{2\sigma}{N} \sum_{k=1}^N \langle x_k-x_{i_t} ,v_{i_t}\rangle
.
\end{align*}
Note that $\|  v_k\|\|v_{i_t}\| - \|v_{i_t}\|^2\leq 0$. By the assumption of $\psi_m$ and the index $i_t$, we have
\begin{align*}
\frac{d}{dt}\|v_{i_t}\|^2
&\leq 2\sum_{k=1}^N\frac{\psi_m}{N}(\| v_k\|\|v_{i_t}\| - \|v_{i_t}\|^2) + \frac{2\sigma}{N} \sum_{k=1}^N \langle x_k-x_{i_t} ,v_{i_t}\rangle.
\end{align*}
Young's inequality implies that
\begin{align*}
\frac{d}{dt}\|v_{i_t}\|^2
&\leq \sum_{k=1}^N\frac{\psi_m}{N}\bigg(\|v_k\|^2 -\|v_{i_t}\|^2\bigg) + \frac{\sigma}{N} \sum_{k=1}^N \bigg(\frac{ \|x_k-x_{i_t}\|^2}{\eta} +\eta\|v_{i_t}\|^2\bigg),
\end{align*}
for any $\eta>0$. Thus, we have
\begin{align*}
\frac{d}{dt}\|v_{i_t}\|^2\leq  \psi_m\E_K- (\psi_m-\sigma \eta )\|v_{i_t}\|^2 + \frac{\sigma}{N} \sum_{k=1}^N \frac{ \|x_k-x_{i_t}\|^2}{\eta} .
\end{align*}

Let $\eta=\psi_m/2\sigma$. Then
\begin{align*}
\frac{d}{dt}\|v_{i_t}\|^2&\leq  \psi_m\E_K- \frac{\psi_m}{2}\|v_{i_t}\|^2 + \frac{2\sigma^2}{\psi_m} \max_{1\leq i,j\leq N}\|x_{i_t}-x_j\|^2.
\end{align*}
Therefore, we have
\begin{align*}
\|v_{i_t}(t)\|^2&\leq e^{-\frac{\psi_m}{2}t}\|v_{i_t}(0)\|^2+e^{ -\frac{\psi_m}{2}t}\int_0^t e^{ \frac{\psi_m}{2}s}\bigg( \psi_m\E_K(s)+ \frac{2\sigma^2}{\psi_m} \max_{1\leq i,j\leq N}\|x_{i_s}-x_j\|^2(s)\bigg)ds.
\end{align*}
This implies that
\begin{align*}
\|v_{i_t}(t)\|^2&\leq e^{-\frac{\psi_m}{2}t}\|v_{i_t}(0)\|^2+(1-e^{-\frac{\psi_m}{2}t})
\bigg( 2 \sup_{0\leq s\leq t }\E_K(s)+ \frac{4\sigma^2}{\psi_m^2} \sup_{0\leq s\leq t }\max_{1\leq i,j\leq N}\|x_{i_s}-x_j\|^2(s)\bigg)
.\end{align*}
\end{proof}

Next, we provide an estimate for inhomogeneous term $F^{ij}$ via $\mathcal{D}_x$, $\mathcal{D}_v$ and $\mathcal{V}$.
\begin{lemma}\label{lemma 4.3}
Let $\xvi$ be the solution to \eqref{maino}. We assume that $\psi_{ij}$ satisfies $(\mathcal{H}1)$-$(\mathcal{H}3)$. Then the following estimates hold.
\begin{align*}
|F^{ij}_{2}(t)|&\leq (\mathcal{V}(t)+6\|\psi\|_{\mathcal{C}^1} )\mathcal{V}(t) \mathcal{D}_x^2(t)+\psi_0\mathcal{V}(t)\mathcal{D}_x^3(t)
+\frac{\sigma}{2}\mathcal{D}_x^4(t),\end{align*}
and
\begin{align*}
|F^{ij}_{3}(t)|\leq (3\mathcal{V}(t)+6\|\psi\|_{\mathcal{C}^1}+2\sigma)\mathcal{V}(t)\mathcal{D}_{x}^2(t)+(3\mathcal{V}(t)+7\|\psi\|_{\mathcal{C}^1})\mathcal{V}(t)\mathcal{D}_{v}^2(t)  +
\psi_0\mathcal{V}(t)\mathcal{D}_{x}^4(t),
\end{align*}
where $\psi_0=\psi(0)$ and
\[\|\psi\|_{\mathcal{C}^1}=\sup_{x\in [0,2]}(|\psi(x)|+|\psi'(x)|).\]
\end{lemma}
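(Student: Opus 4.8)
The plan is to estimate each of the terms appearing in the explicit expressions \eqref{def:F12} and \eqref{def:F3} for $F^{ij}_2$ and $F^{ij}_3$ one at a time, bounding every factor by either $\mathcal V(t)$, $\mathcal D_x(t)$, $\mathcal D_v(t)$, or a constant built from $\psi$. Throughout, I would repeatedly use three elementary facts: (i) since $\D$ is the unit sphere, $\|x_i\|=1$ and $\|x_k-x_i\|\le\mathcal D_x(t)\le 2$; (ii) the modulus conservation property of the rotation operator, $\|R_{x_k\rightarrow x_i}(v_k)\|=\|v_k\|\le\mathcal V(t)$ (Proposition~\ref{prop:tan}), together with $\|v_i\|\le\mathcal V(t)$; and (iii) the identity $1-\langle x_i,x_j\rangle=\tfrac12\|x_i-x_j\|^2$, which converts every inner-product-minus-one factor into a $\tfrac12\mathcal D_x^2$ factor. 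For the $\psi$-weight differences I would use $(\mathcal H1)$–$(\mathcal H3)$: by the mean value theorem $|\psi_{ik}-\psi_{ii}|=|\psi(\|x_i-x_k\|)-\psi(0)|\le\|\psi\|_{\mathcal C^1}\|x_i-x_k\|\le\|\psi\|_{\mathcal C^1}\mathcal D_x(t)$, and likewise $|\psi_{ik}-\psi_{jk}|\le\|\psi\|_{\mathcal C^1}\big|\,\|x_i-x_k\|-\|x_j-x_k\|\,\big|\le\|\psi\|_{\mathcal C^1}\mathcal D_x(t)$ by the reverse triangle inequality.

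For $F^{ij}_2$: the term $-\tfrac12(\|v_i\|^2+\|v_j\|^2)\|x_i-x_j\|^2$ is bounded by $\mathcal V^2\mathcal D_x^2$. In the $\psi_0$-term, I would split $R_{x_k\rightarrow x_i}(v_k)-R_{x_k\rightarrow x_j}(v_k)=\big(R_{x_k\rightarrow x_i}(v_k)-v_k\big)-\big(R_{x_k\rightarrow x_j}(v_k)-v_k\big)$ — no, more efficiently, pair it against $x_i-x_j$ and note that since $R_{x_k\rightarrow x_i}(v_k)\in T_{x_i}\D$ one has $\langle R_{x_k\rightarrow x_i}(v_k),x_i-x_j\rangle=-\langle R_{x_k\rightarrow x_i}(v_k),x_j\rangle$; comparing the two tangent-space projections of $v_k$ at $x_i$ and $x_j$ and using the Lipschitz dependence of $R$ on the base point (cf.\ \eqref{eqn:rot}) gives a bound $\le C\|\psi\|_{\mathcal C^1}\mathcal V\mathcal D_x^2$ — but in fact the cleanest route is to absorb this into the generic $\|\psi\|_{\mathcal C^1}$-terms below, together with the two sums over $(\psi_{ik}-\psi_{ii})$ and $(\psi_{jk}-\psi_{jj})$, each of which contributes $\le\|\psi\|_{\mathcal C^1}\mathcal D_x\cdot(2\mathcal V)\cdot\mathcal D_x$; summing and counting carefully yields the stated $6\|\psi\|_{\mathcal C^1}\mathcal V\mathcal D_x^2$. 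The remaining $\psi_0$-contribution, after using $\|R_{x_k\rightarrow x_i}(v_k)-v_k\|\le\|R_{x_k\rightarrow x_i}(v_k)\|+\|v_k\|\le 2\mathcal V$ and that the difference of the two rotated vectors, paired against $x_i-x_j$, is $O(\mathcal D_x)$ smaller, is where the cubic term $\psi_0\mathcal V\mathcal D_x^3$ comes from. Finally the last two bonding-force sums are each $\le\tfrac{\sigma}{4}\cdot\mathcal D_x^2\cdot\mathcal D_x^2=\tfrac{\sigma}{4}\mathcal D_x^4$, giving $\tfrac{\sigma}{2}\mathcal D_x^4$ in total. The argument for $F^{ij}_3$ is structurally identical: the first term gives $2\|v_i\|^2$-type contributions bounded using $\langle x_i,v_i-v_j\rangle$, which after writing $\langle x_i,v_i\rangle=0$ (from Proposition~\ref{prop:uni}) becomes $-\langle x_i,v_j\rangle=-\langle x_i-x_j,v_j\rangle$, hence $\le\mathcal V\mathcal D_x$, producing $\mathcal V^2\mathcal D_x$-times factors; combined with the other $v_i-v_j\le\mathcal D_v$ pairings and the same $\psi$-difference bounds one collects the coefficients $3\mathcal V+6\|\psi\|_{\mathcal C^1}+2\sigma$ on $\mathcal D_x^2$ and $3\mathcal V+7\|\psi\|_{\mathcal C^1}$ on $\mathcal D_v^2$, while the $\psi_0$-rotation-difference term again contributes the quartic $\psi_0\mathcal V\mathcal D_x^4$ (here $\mathcal D_x^4$ rather than $\mathcal D_x^3$ because the relevant vector is paired against $v_i-v_j$ which is itself not the source of a $\mathcal D_x$ factor, so the extra smallness comes entirely from a double rotation-difference estimate — I would double-check this bookkeeping against the factor $2\sum_k\psi_0/N$ in \eqref{def:F3}). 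The two bonding sums $\tfrac{2\sigma}{N}\sum_k(\langle x_i,x_k\rangle-1)\langle x_i,v_j\rangle$ and its symmetric partner each give $\le\sigma\mathcal D_x^2\cdot\mathcal V\mathcal D_x$-type contributions... I would in fact use $\langle x_i,v_j\rangle=\langle x_i-x_j,v_j\rangle$ (again by $\langle x_j,v_j\rangle=0$) so each is $\le 2\sigma\cdot\tfrac12\mathcal D_x^2\cdot\mathcal V\mathcal D_x$, but to match the stated bound $2\sigma\mathcal V\mathcal D_x^2$ one instead keeps $|\langle x_i,v_j\rangle|\le\mathcal V$ and $|\langle x_i,x_k\rangle-1|=\tfrac12\|x_i-x_k\|^2\le\tfrac12\mathcal D_x^2$, giving $2\sigma\cdot\tfrac12\mathcal D_x^2\cdot\mathcal V=\sigma\mathcal V\mathcal D_x^2$ per sum, hence $2\sigma\mathcal V\mathcal D_x^2$ total.

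The main obstacle will be the careful treatment of the \emph{difference of rotation operators with the same source but different targets}, $R_{x_k\rightarrow x_i}(v_k)-R_{x_k\rightarrow x_j}(v_k)$, appearing in both $F^{ij}_2$ and $F^{ij}_3$: naively it is only $O(\mathcal V)$, which would give the wrong (quadratic, not cubic/quartic) order. The point is that it must be paired against $x_i-x_j$ or $v_i-v_j$ and, crucially, each rotated vector lies in the tangent plane at its own target point, so one should first project onto the common normal direction and exploit that $\langle R_{x_k\rightarrow x_i}(v_k),x_i\rangle=0$; the residual tangential discrepancy between the two parallel transports of $v_k$ is genuinely $O(\mathcal D_x)$ because $R(x_k,\cdot)$ is $C^1$ in its second argument with a bound controlled by $\|x_i-x_j\|$ away from antipodal configurations (guaranteed here by the small-data hypothesis, which keeps $\mathcal D_x$ bounded well below $2$). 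I would isolate this as a short sub-estimate — a Lipschitz bound of the form $\|R_{x_k\rightarrow x_i}(v_k)-R_{x_k\rightarrow x_j}(v_k)\|\le C\|x_i-x_j\|\,\|v_k\|$ valid uniformly while $\mathcal D_x\le 1$, say — proved directly from the entrywise formula \eqref{eqn:rot}, and then all remaining steps are the routine applications of Cauchy–Schwarz, $\|x_i-x_j\|\le\mathcal D_x$, $\|v_i-v_j\|\le\mathcal D_v$, $\|v_k\|\le\mathcal V$, the Lipschitz bounds on $\psi$, and the identity $1-\langle x_i,x_j\rangle=\tfrac12\|x_i-x_j\|^2$ described above, with the final step being a bookkeeping of how many sub-terms of each type occur so that the numerical coefficients $6$, $7$, $3$, $2$ come out exactly as stated.
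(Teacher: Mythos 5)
Your overall strategy is the paper's: split $F^{ij}_2$ and $F^{ij}_3$ into their six constituent pieces, and estimate each with Cauchy--Schwarz, $\|R_{x_k\rightarrow x_i}(v_k)\|=\|v_k\|\leq\mathcal V$, the identity $1-\langle x_i,x_j\rangle=\tfrac12\|x_i-x_j\|^2$, and $|\psi_{ik}-\psi_{ii}|\leq\|\psi\|_{\mathcal C^1}\|x_i-x_k\|$; your treatment of $F^{ij}_{21}$, $F^{ij}_{25}$, $F^{ij}_{26}$, $F^{ij}_{31}$, $F^{ij}_{35}$, $F^{ij}_{36}$ and of the $(\psi_{ik}-\psi_{ii})$-sums matches the paper's. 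The one substantive divergence is the step you yourself identify as the main obstacle, the difference $R_{x_k\rightarrow x_i}(v_k)-R_{x_k\rightarrow x_j}(v_k)$. You propose a clean Lipschitz sub-lemma $\|R_{x_k\rightarrow x_i}(v_k)-R_{x_k\rightarrow x_j}(v_k)\|\leq C\|x_i-x_j\|\,\|v_k\|$. The paper does something weaker and cheaper: expanding via \eqref{eqn:rot}, the $\langle x_k,\cdot\rangle I$ and $x_\cdot x_k^T - x_k x_\cdot^T$ parts are genuinely Lipschitz in the target point and give $2\mathcal V\|x_i-x_j\|$, while the two \eqref{R3}-parts are \emph{not} differenced at all --- each is bounded separately by $(1-\langle x_k,x_i\rangle)\|v_k\|=\tfrac12\|x_k-x_i\|^2\|v_k\|$ using only $\|(u/|u|)(u/|u|)^T\|\leq 1$, yielding the total $2\mathcal V\|x_i-x_j\|+\tfrac{\mathcal V}{2}\|x_i-x_k\|^2+\tfrac{\mathcal V}{2}\|x_j-x_k\|^2$. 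This is precisely the source of the cubic term $\psi_0\mathcal V\mathcal D_x^3$ in the $F_2$ bound and (after Young's inequality on $\mathcal D_x^2\mathcal D_v$) of the quartic term $\psi_0\mathcal V\mathcal D_x^4$ in the $F_3$ bound.

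Two concrete caveats about your alternative. First, a uniform Lipschitz constant for $x\mapsto(1-\langle x_k,x\rangle)\bigl(\tfrac{x_k\times x}{|x_k\times x|}\bigr)\bigl(\tfrac{x_k\times x}{|x_k\times x|}\bigr)^T$ requires handling the singularity of the normalized cross product at $x=x_k$ (a near-\emph{coincidence}, not near-antipodal, configuration --- which certainly occurs during flocking); it does go through because the quadratically vanishing prefactor compensates the $1/|x_k\times x|$ blow-up of the direction field, but "directly from the entrywise formula" this is the one genuinely delicate computation and you should not wave at antipodal avoidance for it. Second, with an unspecified constant $C$ your sub-lemma produces a bound $C\psi_0\mathcal V\mathcal D_x^2$ with no cubic or quartic remainder, so it cannot reproduce the lemma's stated right-hand sides with the exact coefficients $(\mathcal V+6\|\psi\|_{\mathcal C^1})\mathcal V$, $\psi_0\mathcal V\mathcal D_x^3$, $2\sigma$, and $7\|\psi\|_{\mathcal C^1}$; since these constants feed directly into the thresholds $\mathcal V_0$, $\E_0$, $X_M$ of Theorem~\ref{thm:1}, the bookkeeping you defer is not optional. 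If you replace your Lipschitz sub-lemma by the paper's separate bounding of the two \eqref{R3}-pieces, the rest of your outline closes as written.
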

\begin{proof}
For simplicity, we define
\begin{align*}
F^{ij}_{21}:=&
-\frac{\|v_i\|^2+\|v_j\|^2}{2}\|x_i-x_j\|^2,
\\
F^{ij}_{22}:=& \frac{\psi_0}{N}\sum_{k=1}^N
\langle R_{x_k \rightarrow x_i} (v_k)
-R_{x_k \rightarrow x_j} (v_k)
,x_i-x_j\rangle,
\\
F^{ij}_{23}:=&
\sum_{k=1}^N\left (\frac{\psi_{ik}}{N}-\frac{\psi_{ii}}{N}\right)\langle R_{x_k \rightarrow x_i} (v_k)- v_i
,x_i-x_j\rangle
,\\
F^{ij}_{24}:=&
\sum_{k=1}^N\left (\frac{\psi_{jk}}{N}-\frac{\psi_{jj}}{N}\right)\langle R_{x_k \rightarrow x_j} (v_k)- v_j
,x_i-x_j\rangle
,\\
F^{ij}_{25}:=&
 \frac{\sigma}{4N}\sum_{k=1}^N\|x_k-x_i\|^2 \|x_i-x_j\|^2
, \\
 F^{ij}_{26}:=&
 \frac{\sigma}{4N}\sum_{k=1}^N\|x_k-x_j\|^2 \|x_i-x_j\|^2.
 \end{align*}
Then
\begin{align*}
   F^{ij}_2(t)=F^{ij}_{21}(t)+F^{ij}_{22}(t)+F^{ij}_{23}(t)+F^{ij}_{24}(t)+F^{ij}_{25}(t)+F^{ij}_{26}(t).
 \end{align*}
Clearly, we have
\begin{align*}
|F^{ij}_{21}|\leq \mathcal{V}^2 \|x_i-x_j\|^2.
\end{align*}

Note that
\begin{align}\label{est:F22}
|F^{ij}_{22}| &\leq \frac{\psi_0}{N}\sum_{k=1}^N
\|R_{x_k \rightarrow x_i} (v_k)
-R_{x_k \rightarrow x_j} (v_k)\|
\|x_i-x_j\|.
\end{align}
By the definition of the rotation operator $R$ and $\|x_i\|=1$ for any $i\in \{1,\ldots,N\}$ and $t\geq 0$, we have
\begin{align*}\|R_{x_k \rightarrow x_i} (v_k)
-R_{x_k \rightarrow x_j} (v_k)\| &\leq
\|\langle x_k,x_i-x_j\rangle  v_k\|  +\| \langle x_i-x_j,v_k\rangle x_k\|
\\
&\quad+ \left\|(1-  \langle x_k,x_i\rangle) \left( \frac{x_k \times x_i}{|x_k \times x_i|} \right) \left( \frac{\langle x_k \times x_i, v_k\rangle }{|x_k \times x_i|} \right)\right\|
\\
&\quad+\left\| (1-  \langle x_k,x_j\rangle) \left( \frac{x_k \times x_j}{|x_k \times x_j|} \right) \left( \frac{\langle x_k \times x_j,v_k\rangle}{|x_k \times x_j|} \right)  \right\|
\\&
\leq 2\mathcal{V}\|x_i-x_j\|+\frac{\mathcal{V}}{2}\|x_i-x_k\|^2+\frac{\mathcal{V}}{2}\|x_j-x_k\|^2.
\end{align*}
This and \eqref{est:F22} yield that
\begin{align*}
|F^{ij}_{22}| &\leq
2 \psi_0\mathcal{V} \|x_i-x_j\|^2+\frac{\psi_0\mathcal{V}}{2N}\sum_{k=1}^N
\|x_i-x_j\|\|x_i-x_k\|^2+\frac{\psi_0\mathcal{V}}{2N}\sum_{k=1}^N\|x_i-x_j\|\|x_j-x_k\|^2.
\end{align*}
Note that $|\psi_{ik}-\psi_{ii}|=|\psi(\|x_i-x_k\|)-\psi(0)|\leq \|\psi\|_{\mathcal{C}^1}\|x_i-x_k\|$. By the modulus conservation property of the rotation operator $R$ and this estimate,
\begin{align*}
|F^{ij}_{23}| &\leq\sum_{k=1}^N \frac{ \|\psi\|_{\mathcal{C}^1}}{N}\big(\|R_{x_k \rightarrow x_i} (v_k)\|+\| v_i\|\big)
\|x_i-x_k\|\|x_i-x_j\|\\
&\leq \frac{ 2\|\psi\|_{\mathcal{C}^1}}{N}   \mathcal{V}\sum_{k=1}^N  \|x_i-x_k\|\|x_i-x_j\|.
\end{align*}
Similarly,
\begin{align*}
|F^{ij}_{24}|
&\leq \frac{2 \|\psi\|_{\mathcal{C}^1}}{N}\mathcal{V}\sum_{k=1}^N  \|x_j-x_k\|\|x_i-x_j\|.
\end{align*}
Therefore,
\begin{align*}
|F^{ij}_{2}(t)|&\leq (\mathcal{V}(t)+6\|\psi\|_{\mathcal{C}^1} )\mathcal{V}(t) \mathcal{D}_x^2(t)+\psi_0\mathcal{V}(t)\mathcal{D}_x^3(t)
+\frac{\sigma}{2}\mathcal{D}_x^4(t).\end{align*}

Similarly, we define
 \begin{align*}
 F^{ij}_{31}=&2\langle -\|v_i\|^2x_i
 +\|v_j\|^2x_j
 ,v_i-v_j\rangle,
\\
F^{ij}_{32}=&
2\sum_{k=1}^N\frac{\psi_0}{N} \left\langle R_{x_k \rightarrow x_i} (v_k)
-R_{x_k \rightarrow x_j} (v_k)
,v_i-v_j\right\rangle,
\\
F^{ij}_{33}=&
2\sum_{k=1}^N\left(\frac{\psi_{ik}}{N}-\frac{\psi_{ii}}{N}\right)\left\langle R_{x_k \rightarrow x_i} (v_k)- v_i
,v_i-v_j\right\rangle,
\\
F^{ij}_{34}=&-2\sum_{k=1}^N\left(\frac{\psi_{jk}}{N}-\frac{\psi_{jj}}{N}\right)\left\langle R_{x_k \rightarrow x_j} (v_k)- v_j
,v_i-v_j\right\rangle,
\\
F^{ij}_{35}=&
\frac{2\sigma}{N}\sum_{k=1}^N  ( \langle x_i,x_k \rangle-1) \langle  x_i,v_j\rangle
,\\
F^{ij}_{36}=&
\frac{2\sigma}{N}\sum_{k=1}^N
 (\langle x_j,x_k \rangle-1) \langle  x_j
,v_i\rangle.
 \end{align*}
Then,
\begin{align*}
 F^{ij}_3(t) =F^{ij}_{31}+F^{ij}_{32}+F^{ij}_{33}+F^{ij}_{34}+F^{ij}_{35}+F^{ij}_{36}.
 \end{align*}

We next provide upper bounds for each of the terms sequentially.
\begin{align*}
   F^{ij}_{31}&=2 \|v_i\|^2\langle x_i,v_j\rangle
 +2\|v_j\|^2\langle x_j
 ,v_i\rangle
 \\
 &=2 (\|v_i\|^2-\|v_j\|^2)\langle x_i,v_j\rangle
 +2\|v_j\|^2(\langle x_i,v_j\rangle+\langle x_j
 ,v_i\rangle)
\\
 &=2 (\|v_i\|^2-\|v_j\|^2)\langle x_i-x_j,v_j\rangle
 -2\|v_j\|^2\langle x_i-x_j,v_i-v_j\rangle.
 \end{align*}
Therefore, we have
\begin{align*}
  | F^{ij}_{31}|
 &\leq 2 \|v_i-v_j\|(\|v_i\|+\|v_j\|)\|x_i-x_j\|\|v_j\|
 +2\|v_j\|^2\|x_i-x_j\| \|v_i-v_j\|\\
 &\leq 6\mathcal{V}^2\|x_i-x_j\| \|v_i-v_j\|.
 \end{align*}
Similar to $F^{ij}_2$, we can obtain
\begin{align*}
  | F^{ij}_{32}|
 &\leq 4\psi_0\mathcal{V}\|x_i-x_j\|\|v_i-v_j\|
 \\&\quad+
 \frac{\psi_0\mathcal{V}}{N}\sum_{k=1}^N
 \|x_i-x_k\|^2\|v_i-v_j\|+
 \frac{\psi_0\mathcal{V}}{N}\sum_{k=1}^N\|x_j-x_k\|^2\|v_i-v_j\|,\\
|F^{ij}_{33}|
&\leq \frac{ 4\|\psi\|_{\mathcal{C}^1}\mathcal{V}}{N}\sum_{k=1}^N  \|x_i-x_k\|\|v_i-v_j\|,
\\
|F^{ij}_{34}|
&\leq \frac{ 4\|\psi\|_{\mathcal{C}^1}\mathcal{V}}{N}\sum_{k=1}^N  \|x_j-x_k\|\|v_i-v_j\|.
\end{align*}
For $F^{ij}_{35}$ and $F^{ij}_{36}$,
\begin{align*}
|F^{ij}_{35}|
\leq\frac{\sigma\mathcal{V}}{N}\sum_{k=1}^N   \| x_i-x_k\|^2,
\qquad
|F^{ij}_{36}|
\leq\frac{\sigma\mathcal{V}}{N}\sum_{k=1}^N   \| x_j-x_k\|^2.
\end{align*}
Therefore,
\begin{align*}
|F^{ij}_{3}(t)|\leq (3\mathcal{V}(t)+6\|\psi\|_{\mathcal{C}^1}+2\sigma)\mathcal{V}(t)\mathcal{D}_{x}^2(t)+(3\mathcal{V}(t)+7\|\psi\|_{\mathcal{C}^1})\mathcal{V}(t)\mathcal{D}_{v}^2(t)  +
\psi_0\mathcal{V}(t)\mathcal{D}_{x}^4(t).
\end{align*}
\end{proof}

As a consequence of Proposition \ref{prop:vt}, Proposition \ref{prop 4.2}, Lemma \ref{lemma 4.1}, and Lemma \ref{lemma 4.3}, we prove our main theorem as follows:

\begin{proof}[Proof of Theorem \ref{thm:1}.]
By Proposition \ref{prop 4.2}, $X^{ij}$ satisfies
\begin{align}\label{eq:X_proof}
\frac{d}{dt}X^{ij}(t)=A X^{ij}+F^{ij},
 \end{align}
where  the coefficient matrix $A$ is given by
 \begin{align*}
A =
\left(\begin{matrix}
0 & 2 &0  \\
-\sigma & -\psi_0  &1 \\
0 & -2\sigma & -2\psi_0
\end{matrix}\right).
\end{align*}
Note that the eigenvalues of $A$ is
\[\left\{- \psi_0 , ~- \psi_0 -\sqrt{\psi_0^2-4\sigma},~- \psi_0 +\sqrt{\psi_0^2-4\sigma}\right\}.\]

Thus, if $\psi_0^2 \leq 4\sigma$, then  the maximum real part of the eigenvalues of $A(t)$ is $-\psi_0$.
If $\psi_0^2> 4\sigma$, then the maximum real part of the eigenvalues of $A(t)$ is $-\psi_0+\sqrt{\psi_0^2-4\sigma}$ and satisfies
\begin{align*}- \psi_0 +\sqrt{\psi_0^2-4\sigma}
&=-\frac{4\sigma}{ \psi_0 +\sqrt{\psi_0^2-4\sigma}}
\\
&\leq-\frac{2\sigma}{ \psi_0}.\end{align*}

We denote the maximum real part of the eigenvalues of $A$ by
 \[-\mu<0.\]
 Therefore, by \eqref{eq:X_proof},
\begin{align*}\frac{1}{2}\frac{d}{dt}\|X^{ij}(t)\|^2&=\langle X^{ij}(t), A X^{ij}(t)\rangle  +\langle X^{ij}(t), F^{ij}(t)\rangle
\\
&\leq -\mu \|X^{ij}(t)\|^2 +\| X^{ij}(t)\|~\| F^{ij}(t)\|.
\end{align*}
Thus, we have
\begin{align*}\frac{d}{dt}\|X^{ij}(t)\|
&\leq -\mu \|X^{ij}(t)\| +\| F^{ij}(t)\|.
\end{align*}
By Lemma \ref{lemma 4.3},
\begin{align*}\| F^{ij}(t)\|\leq C\frac{\mathcal{V}(t)+ \mathcal{V}^2(t)}{4}\mathcal{D}_x^2(t)+C\frac{\mathcal{V}(t)+ \mathcal{V}^2(t)}{4}\mathcal{D}_v^2(t)
+\frac{\sigma}{2}\mathcal{D}_x^4(t),\end{align*}
where $C=C(\psi,\sigma)$ is a constant depending on $\psi$ and $\sigma$.

Therefore, we have
\begin{align*}\frac{d}{dt}\|X^{ij}(t)\|
&\leq -\mu \|X^{ij}(t)\| +C\frac{\mathcal{V}(t)+ \mathcal{V}^2(t)}{4} \mathcal{D}_x^2(t)+C\frac{\mathcal{V}(t)+ \mathcal{V}^2(t)}{4}\mathcal{D}_v^2(t)
+\frac{\sigma}{2}\mathcal{D}_x^4(t).
\end{align*}

Let
\[X(t)=\max_{1\leq i,j\leq N}  \|X^{ij}(t)\|.\]
Clearly, we have
\begin{align*}
\mathcal{D}_x^2(t),~\mathcal{D}_v^2(t)\leq X(t).
\end{align*}
Then
\begin{align}\label{est:X}\frac{d}{dt}X(t)
&\leq -\mu X(t) +C\frac{\mathcal{V}(t)+ \mathcal{V}^2(t)}{2}X(t)+\frac{\sigma}{2}X^2(t).
\end{align}

We let
\begin{align*}
\mathcal{V}_0:=\left\{ \begin{array}{ll}
             \displaystyle  \frac{\mu}{4C}&\mbox{if}\quad\displaystyle \frac{\mu}{4C}<1,  \\
               \displaystyle\sqrt{\frac{\mu}{4C}}&\mbox{if}\quad\displaystyle \frac{\mu}{4C}\geq 1,
              \end{array}\right.\qquad
                            \E_0:=
   \left\{ \begin{array}{ll}
              \displaystyle \frac{\mu^2}{64C^2}&\mbox{if}\quad\displaystyle \frac{\mu}{4C}<1,  \\
               \displaystyle\frac{\mu}{16C}&\mbox{if}\quad\displaystyle \frac{\mu}{4C}\geq 1,
              \end{array}\right.
\end{align*}
and
 $\psi_m=\psi(\sqrt{X_M})$, where  $X_M>0$ is a constant satisfying
\begin{align}\label{X_M}
  \left\{ \begin{array}{ll}
               \displaystyle  \sqrt{X_M}=\frac{\mu}{\sqrt{128 }C\sigma }\psi(\sqrt{X_M})&\mbox{if}\quad  \displaystyle \frac{\mu}{4C}<1,  \\
               \displaystyle  \sqrt{X_M}=\frac{\sqrt{\mu}}{\sqrt{32 C}\sigma}\psi(\sqrt{X_M})&\mbox{if}\quad\displaystyle \frac{\mu}{4C}\geq 1.
              \end{array}\right.
\end{align}
We assume that
\begin{align}\label{eq 4.4}
\mathcal{V}(0)<\mathcal{V}_0,
\quad
   \E(0)<
   \E_0,\quad X(0)< \min\left\{\frac{\mu}{2\sigma},  X_M
 \right\}.
\end{align}
By the initial data assumption, there is $\epsilon>0$ such that for $t\in [0,\epsilon)$,
\begin{align*}
   \mathcal{V}(t)<\mathcal{V}_0,\quad X(t)< \min\left\{\frac{\mu}{2\sigma},  X_M
 \right\}.
\end{align*}
 Assume that there is $T>0$ such that
\begin{align}\label{assumption:T1}
   \mathcal{V}(t)< \mathcal{V}_0,\quad X(t)< \min\left\{\frac{\mu}{2\sigma},  X_M
 \right\}\quad \mbox{for any $t\in [0,T)$},
\end{align}
but
\begin{align}\label{assumption:T2}
   \mathcal{V}(T)= \mathcal{V}_0,\quad\mbox{or}\quad X(T)= \min\left\{\frac{\mu}{2\sigma},  X_M
 \right\}.
\end{align}
Then by \eqref{est:X}, for $t\in [0,T)$,
\begin{align*}\frac{d}{dt}X(t)
&\leq -\frac{\mu}{2} X(t),
\end{align*}
i.e., for $t\in [0,T)$,
\[X(t)<X(0).\]
 This implies that
\[X(T)=\lim_{t\to T^-}X(t)\leq X(0)<\min\left\{\frac{\mu}{2\sigma},  X_M
 \right\}.\]

Note that for any $i,j \in \{1,\ldots,N\}$, $\psi_{ij}(s)=\psi(\|x_i(s)-x_j(s)\|)\geq \psi(\sqrt{X_M})= \psi_m$ on $0\leq s\leq T$ and  by Proposition \ref{prop:vt}, for any $t\geq 0$,
\begin{align*}
 \E_K(t) \leq  \E(t)\leq  \E(0).
\end{align*}
Here, $\E_K(t)$ is given in \eqref{eqn:e}.  Therefore, by Lemma \ref{lemma 4.1} and the above,
\begin{align}\begin{aligned}\label{est:V}
\mathcal{V}^2(t)&\leq e^{-\frac{\psi_m}{2}t}\mathcal{V}^2(0)+(1-e^{-\frac{\psi_m}{2}t})
\bigg(  2\sup_{0\leq s\leq t }\E_K(s)+ \frac{4\sigma^2}{\psi_m^2} \sup_{0\leq s\leq t }\mathcal{D}_{x}^2(s)\bigg)
\\
&\leq e^{-\frac{\psi_m}{2}t}\mathcal{V}^2(0)+(1-e^{-\frac{\psi_m}{2}t})
\bigg(2 \E(0)+ \frac{4\sigma^2}{\psi_m^2} \sup_{0\leq s\leq t }X(s)\bigg).\end{aligned}\end{align}
By \eqref{est:V}, for $t\in [0,T)$,
\begin{align*}
\mathcal{V}^2(t)
&\leq e^{-\frac{\psi_m}{2}t}\mathcal{V}^2(0)+(1-e^{-\frac{\psi_m}{2}t})
\bigg( 2\E(0)+ \frac{4\sigma^2}{\psi_m^2} X(0)\bigg)
\end{align*}
and
\[\mathcal{V}^2(T)=\lim_{t\to T^-}\mathcal{V}^2(t)\leq\max\left\{\mathcal{V}^2(0), ~ 2\E(0)+ \frac{4\sigma^2}{\psi_m^2} X(0)   \right\}<\left\{ \begin{array}{ll}
              \displaystyle  \frac{\mu^2}{16C^2}&\mbox{if}\quad\displaystyle \frac{\mu}{4C}<1,  \\
             \displaystyle   \frac{\mu}{4C}&\mbox{if}\quad\displaystyle \frac{\mu}{4C}\geq 1.
              \end{array}\right.\]
Here, we used the equality in \eqref{X_M}.

Therefore, there is no $T>0$ satisfying \eqref{assumption:T1} and \eqref{assumption:T2}, i.e.,
for any $t\geq 0$,  the followings hold.
\begin{align*}
   \mathcal{V}(t)<\mathcal{V}_0,\quad X(t)<\min\left\{\frac{\mu}{2\sigma},  X_M
 \right\}.
\end{align*}
Therefore, by \eqref{est:X},
\begin{align*}\frac{d}{dt}X(t)
&\leq -\frac{\mu}{2} X(t), \quad \mbox{for any $t\in [0,T)$}.
\end{align*}
By Gronwall's lemma, we obtain the desired result.
\end{proof}

\section{Numerical simulations}\label{sec:5}
\setcounter{equation}{0}
In this section, we conduct some numerical simulations of $6$-agents system of \eqref{maino} to confirm our mathematical results in Theorem \ref{thm:1} and to check the exponential convergence rate
\[\delta=\frac{\mu}{2}.\]
We use the fourth order Runge-Kutta method and MATLAB programming for the simulations.\\

\begin{figure}[!ht]
\centering
\begin{minipage}{0.3\textwidth}
\centering
\includegraphics[width=\textwidth]{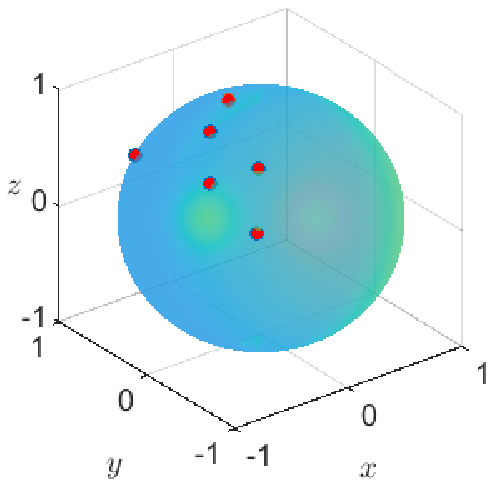}\\
(a) $t=0$
\end{minipage}
\begin{minipage}{0.3\textwidth}
\centering
\includegraphics[width=\textwidth]{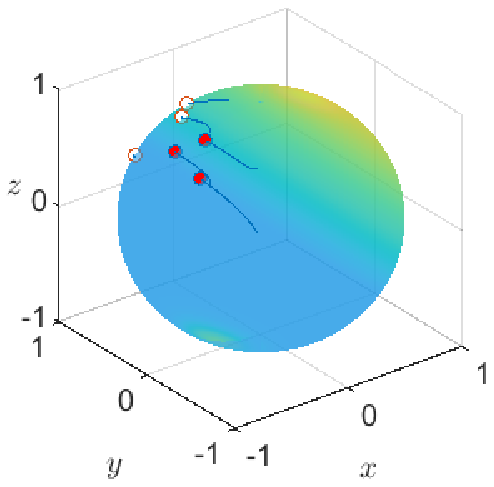}\\
(b) $t=1$
\end{minipage}
\begin{minipage}{0.3\textwidth}
\centering
\includegraphics[width=\textwidth]{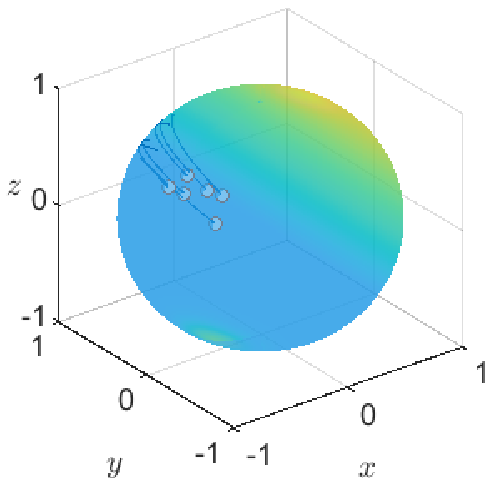}\\
(c) $t=5$
\end{minipage}
\vfill
\begin{minipage}{0.3\textwidth}
\centering
\includegraphics[width=\textwidth]{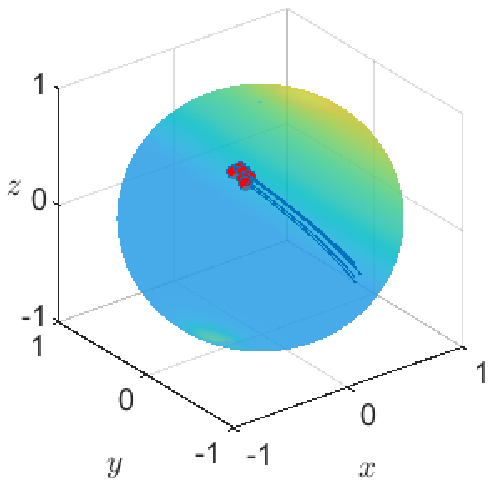}\\
(d) $t=15$
\end{minipage}
\begin{minipage}{0.3\textwidth}
\centering
\includegraphics[width=\textwidth]{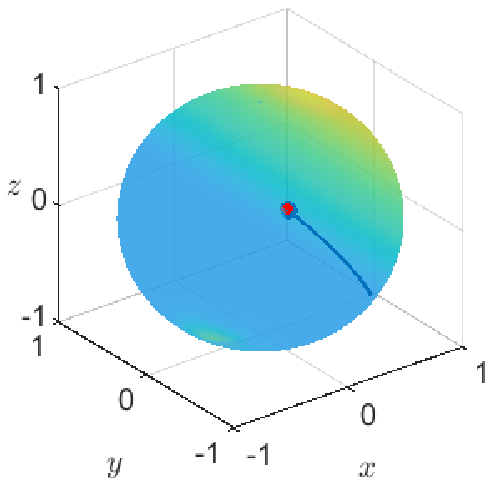}\\
(e) $t=30$
\end{minipage}
\begin{minipage}{0.3\textwidth}
\centering
\includegraphics[width=\textwidth]{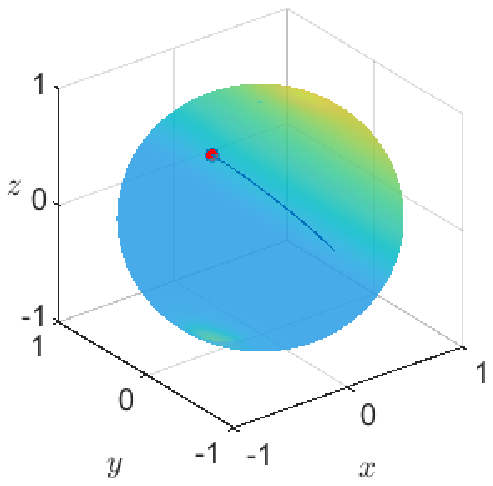}\\
(f) $t=80$
\end{minipage}
\caption{Time evolution of solution of \eqref{maino} under the admissible initial data condition}
\label{fig1}
\end{figure}

Let
\[\psi(x)=3(\exp(2-x)-1) \quad\text{and}\quad \sigma=1.\]
Then we can easily check that $\psi$ satisfies the condition in $(\mathcal{H}1)$-$(\mathcal{H}3)$.
The initial configuration is randomly chosen satisfying conditions in Theorem \ref{thm:1} such that
\begin{align*}
&x_1(0)=(-0.3903,  -0.4756, \phantom{-}0.7883 ), && x_2(0)=(-0.5800,  -0.7067,\phantom{-} 0.4052 ),\\
&x_3(0)=(-0.6746,  -0.2998, \phantom{-}0.6746  ), && x_4(0)=( -0.4472, \phantom{-}0.0000, \phantom{-}0.8944 ),\\
&x_5(0)=(-0.1249,  \phantom{-} 0.2084, \phantom{-}0.9700 ), && x_6(0)=(-0.6236,\phantom{-}0.6236,  \phantom{-}0.4714),
\end{align*}
and
\begin{align*}
&v_1(0)=(-0.4707, \phantom{-}0.1259, -0.1571), && v_2(0)=(-0.0986, \phantom{-}0.4355,  \phantom{-}0.6185),  \\
&v_3(0)=(\phantom{-}0.1892, \phantom{-}0.1666, \phantom{-}0.2631)  ,&& v_4(0)=(\phantom{-}0.4605, \phantom{-}0.5046, \phantom{-}0.2302),  \\
&v_5(0)=(-0.4914, \phantom{-}0.7722, -0.2292), && v_6(0)=(-0.0148, \phantom{-}0.1342,  -0.1971).
\end{align*}

The time evolution of the solution to \eqref{maino} under the above setting  is given in Figure \ref{fig1}.  To visually represent the solution,  we here use  red points for the agent's positions $\{x_i(t)\}_{i=1}^N$   at $t=t_0$ and the blue lines for the trajectory of agents on the time interval  $[t_0-3, 3]$. Here the white points in Figure  \ref{fig1}(b) and (c) mean the agent's positions on the opposite side of the visible side. Eventually, we can observe the phenomenon in Figure  \ref{fig1} that all the agents gather to one point and they converge into a trajectory orbiting a great circle at a constant speed.

In Figure \ref{fig2}, we display the maximal spatial diameter $\max_{i,j} \|x_i(t)-x_j(t)\|$ of the solution and we can check that it decays exponentially as we proved in Theorem \ref{thm:1}. Additionally, if we increase the inter-particle bonding force such as $\sigma=5$, then the above initial data does not satisfy the admissible condition, i.e.,
\[X(0)>  \min\left\{\frac{\mu}{2\sigma},  X_M
 \right\}.\]
\begin{figure}[!ht]
\centering
\begin{minipage}{0.45\textwidth}
\centering
\includegraphics[width=\textwidth]{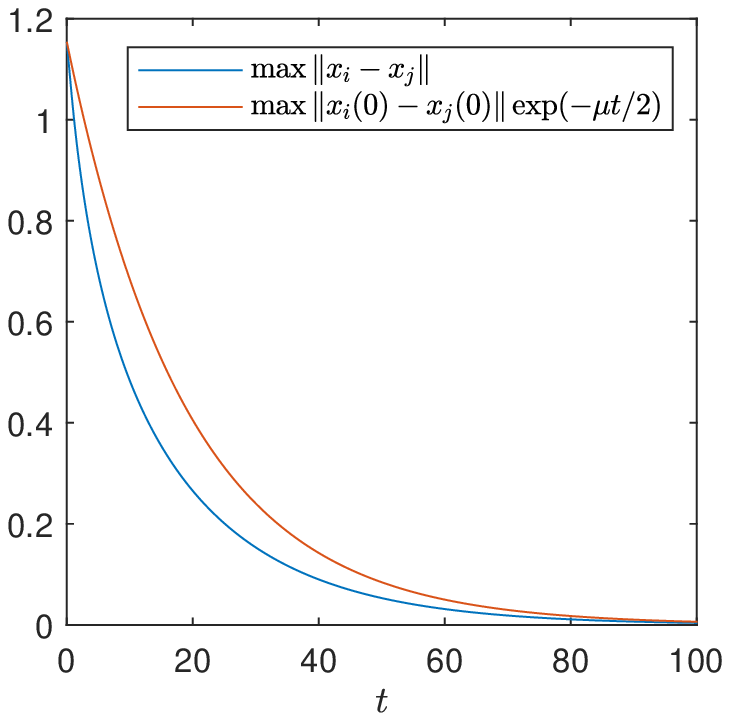}\\
(a) $\max_{i,j} \|x_i(t)-x_j(t)\|$
\end{minipage}
\begin{minipage}{0.45\textwidth}
\centering
\includegraphics[width=\textwidth]{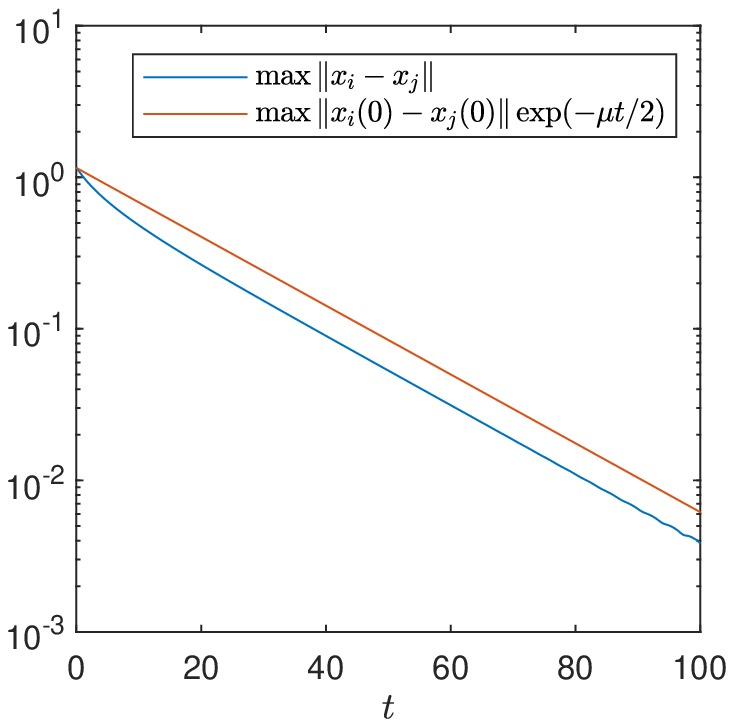}\\
(b) Semi-log graph of (a)
\end{minipage}
\caption{Maximum position diameter for the solution satisfying \eqref{eq 4.4}}
\label{fig2}
\end{figure}

\begin{figure}[!ht]
\centering
\begin{minipage}{0.45\textwidth}
\centering
\includegraphics[width=\textwidth]{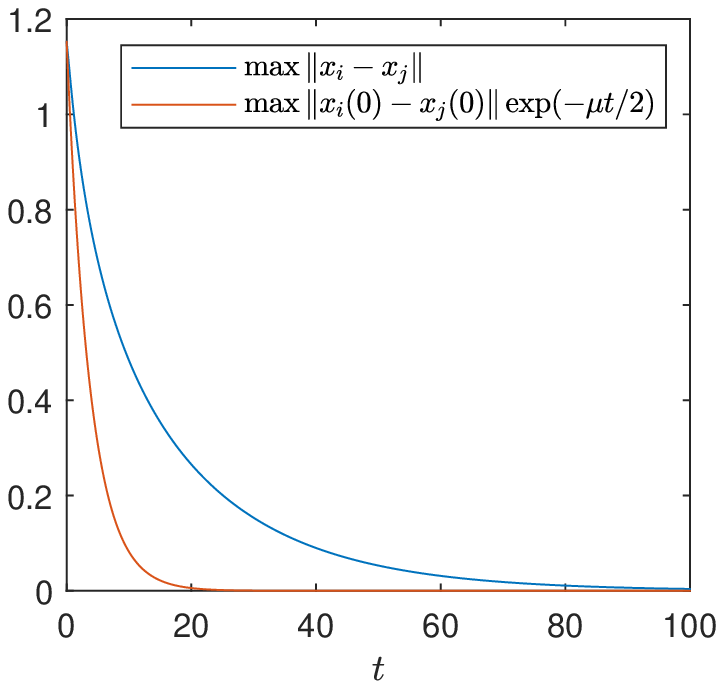}\\
(a) $\max_{i,j} \|x_i(t)-x_j(t)\|$
\end{minipage}
\begin{minipage}{0.45\textwidth}
\centering
\includegraphics[width=\textwidth]{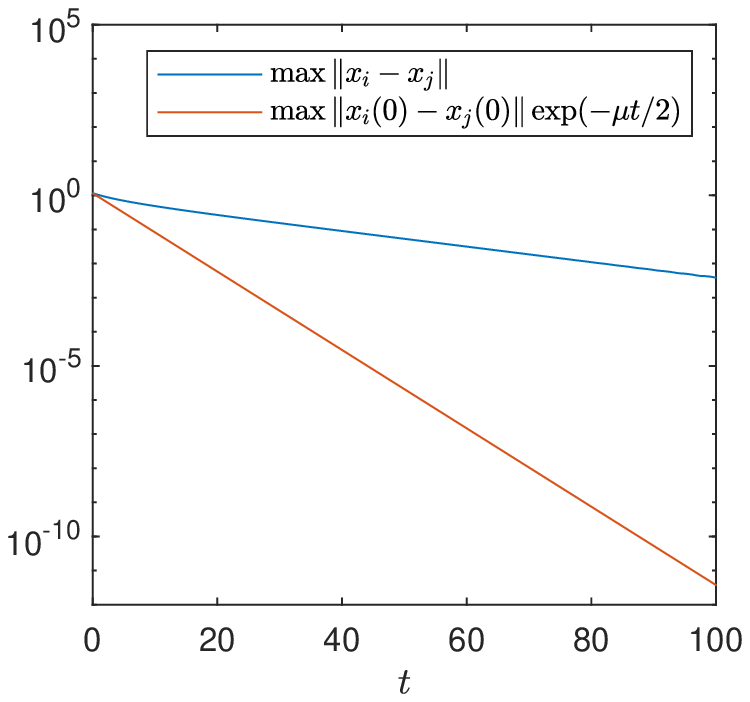}\\
(b) Semi-log graph of (a)
\end{minipage}
\caption{Maximum position diameter for the solution not satisfying \eqref{eq 4.4}.}
\label{fig3}
\end{figure}

Then, we can see that the sufficient exponential decay rate does not appear. See Figure \ref{fig3}.

\section{conclusion and discussion}\label{sec:6}
\setcounter{equation}{0}

In this paper, we studied the interactions of  the inter-particle bonding forces and flocking operator on a sphere. We  show that the model has the complete position  flocking for an admissible initial condition depending on $\psi$, $\sigma$. We note that the initial data condition does not depend on the number of particles $N$. As the flat space case, we obtain that the ensemble converges to  one point particle with one velocity when a flocking model has the inter-particle bonding forces. We crucially use the energy dissipation property in Proposition \ref{prop:vt} because our model has no momentum conservation. From the energy dissipation leads to  uniform upper bound of velocities. We simultaneously use the Lyapunov functional method and a reduction to a linearized system of differential equations to obtain the asymptotic position alignment result.

\section*{Acknowledgments}
S.-H. Choi is partially supported by NRF of Korea (no. 2017R1E1A1A03070692) and Korea Electric Power Corporation(Grant number: R18XA02).



\begin{thebibliography}{10}

\bibitem{A-T}Abdessameud, A., and Tayebi, A. (2010). On consensus algorithms for double-integrator dynamics without velocity measurements and with input constraints. Systems \& Control Letters, 59(12), 812-821.

\bibitem{A-T2} Abdessameud, A., and Tayebi, A. (2013). On consensus algorithms design for double integrator dynamics. Automatica, 49(1), 253-260.


\bibitem{Adler} Adler J. (1966).  Chemotaxis in bacteria. Science.

\bibitem{A-O-S-Y} Ando, H., Oasa, Y., Suzuki, I., and Yamashita, M. (1999). Distributed memoryless point convergence algorithm for mobile robots with limited visibility. IEEE Transactions on Robotics and Automation, 15(5), 818-828.

\bibitem{NSMD}Bahloul, N. E.H. , Boudjit, S.,  A., M. and Boubiche, D.E. (2018).  A flocking-based on demand routing protocol for unmanned aerial   vehicles. Journal of Computer Science and Technology, 33, 263-276.












\bibitem{Car97}  Carroll, S.M.:   Lecture notes on general relativity.  arXiv preprint gr-qc/9712019, 1997.

\bibitem{chandler}Chandler, P. R., Pachter, M.,  Rasmussen, S. (2001).  UAV cooperative control. In Proceedings of the 2001 American Control Conference (Cat. No. 01CH37148),   50-55.

\bibitem{CCH14}
 Chi, D.,  Choi, S.-H. and  Ha, S.-Y. (2014).  Emergent behaviors of a holonomic particle system on a sphere.
Journal of Mathematical Physics, 55(5), 052703.

\bibitem{C-K-S}  Choi, S.-H.,  Kwon, D. and Seo, H. (2020). Cucker-Smale type flocking models on a sphere. arXiv preprint arXiv:2010.10693. \url{https://arxiv.org/abs/2010.10693}

\bibitem{C-K-S2}  Choi, S.-H.,  Kwon, D. and Seo, H.: Flocking formation and stabilizer of boosted cooperative control on a sphere, preprint.

\bibitem{CLV} Cimino, M. G. C.~A. , Lazzeri, A.  and  Vaglini, G. (2015). Combining stigmergic and flocking behaviors to coordinate swarms of   drones performing target search.  2015 6th International Conference on Information, Intelligence,   Systems and Applications.


\bibitem{CucSma07} Cucker, F. and  Smale, S. (2009). Emergent Behavior in Flocks. IEEE Transactions on Automatic Control, 52(5), 852--862.







\bibitem{D} DeGroot, M. H. (1974). Reaching a consensus. Journal of the American Statistical Association, 69(345), 118-121.


\bibitem{DoC16} do~Carmo, M.P. (2016). Differential Geometry of Curves and Surfaces.
  Revised and Updated Second Edition. Courier Dover Publications.

\bibitem{EicFuh98}
Eich-Soellner, E.  and  F{\"u}hrer, C. (1998). Numerical methods in multibody dynamics, vol 45.
  Springer.


\bibitem{F-P-S-W} Flocchini, P., Prencipe, G., Santoro, N., and Widmayer, P. (2005). Gathering of asynchronous robots with limited visibility. Theoretical Computer Science, 337(1-3), 147-168.



\bibitem{Hai11}
Hairer, E. (2011).  Solving differential equations on manifolds. Lecture Notes, Universite de Geneve, Geneva, Switzerland.

\bibitem{Kuromoto}
Kuramoto, Y. (1975). International symposium on mathematical problems in theoretical
  physics.   Lecture notes in Physics.




\bibitem{L-Z}Lageman, C., and Sun, Z. (2016). Consensus on spheres: Convergence analysis and perturbation theory. In 2016 IEEE 55th Conference on Decision and Control (CDC),  19-24.


\bibitem{Lee06}
 Lee, J. M. (2006).   Riemannian Manifolds.  An Introduction to Curvature. Springer Science {\&} Business Media.


\bibitem{L-S} Li, W. and Spong, M. W. (2013). Unified cooperative control of multiple agents on a sphere for different spherical patterns. IEEE Transactions on Automatic Control, 59(5), 1283-1289.



\bibitem{L-F-M} Lin, Z., Francis, B., and Maggiore, M. (2007). State agreement for continuous-time coupled nonlinear systems. SIAM Journal on Control and Optimization, 46(1), 288-307.




\bibitem{Lohe}
Lohe, M.A. (2009).  Non-abelian kuramoto models and synchronization. Journal of Physics A: Mathematical and Theoretical, 42, 395101.

\bibitem{McG03} McGlinn, W. D. (2003). Introduction to Relativity.   JHU Press.





\bibitem{M-K-K}Miao, Y. Q., Khamis, A. and Kamel, M. S. (2010). Applying anti-flocking model in mobile surveillance systems. In 2010 International Conference on Autonomous and Intelligent Systems, AIS 2010, 1-6.



\bibitem{M} Murray, R. M. (2007). Recent research in cooperative control of multivehicle systems.  Journal of Dynamic Systems, Measurement, and Control, 129(5), 571-583.


\bibitem{O} Olfati-Saber, R. (2006). Flocking for multi-agent dynamic systems: Algorithms and theory. IEEE Transactions on automatic control, 51(3), 401-420.

\bibitem{PKH10}
 Park, J.,   Kim, H. J., and  Ha, S.-Y. (2010). Cucker-Smale flocking with inter-particle bonding forces.
 IEEE Transactions on Automatic Control, 55(11), 2617-2623.



\bibitem{R-B-A}Ren, W., Beard, R. W., and Atkins, E. M. (2005). A survey of consensus problems in multi-agent coordination. In Proceedings of the 2005, American Control Conference,  1859-1864.

\bibitem{R-B}Ren, W. and Beard, R. W. (2008). Distributed consensus in multi-vehicle cooperative control. Vol. 27. Springer London.



\bibitem{A-N} Rodriguez-Angeles, A., and Nijmeijer, H. (2004). Mutual synchronization of robots via estimated state feedback: a cooperative approach. IEEE Transactions on control systems technology, 12(4), 542-554.


\bibitem{Sch85}
Schutz, B. F. (1985). A First Course in General Relativity. Cambridge University Press.


\bibitem{S-B}Semnani, S. H. and Basir, O. A. (2014). Semi-flocking algorithm for motion control of mobile sensors in large-scale surveillance systems. IEEE transactions on cybernetics, 45(1), 129-137.




\bibitem{Spi79}
Spivak, M. (1970).  A comprehensive introduction to differential geometry. Vol.
  I.   Publish or Perish, Inc., Wilmington, Del., second edition.

\bibitem{Spi79b}
Spivak, M. (1970). A comprehensive introduction to differential geometry. Vol.
  II.  Publish or Perish, Inc., Wilmington, Del., second edition.

\bibitem{Tal08}  Talman, R. (2008).  Geometric Mechanics. John Wiley {\&} Sons.

\bibitem{T-T}
Toner, J.  and Tu,  Y. (1998).  Flocks, herds, and schools: A quantitative theory of flocking. Physical review E, 58, 4828.



\bibitem{Winfree}
Winfree, A. T. (1967).  Biological rhythms and the behavior of populations of coupled
  oscillators.  Journal of theoretical biology, 16(1), 15-42.



\end{thebibliography}

\end{document}